\documentclass[11pt]{amsart}

\usepackage{enumitem}

\usepackage{hyperref}

\usepackage{bookmark}

\usepackage{newtxtext, kotex}

\usepackage[
textwidth=.85in,
textsize=footnotesize,
colorinlistoftodos]
{todonotes} 

\usepackage{verbatim} 

\usepackage[all,cmtip]{xy} 
\usepackage{multicol}

\usepackage{amsmath,amsfonts,amssymb,mathrsfs}
\usepackage{amsthm,thmtools}

\numberwithin{equation}{section}

\theoremstyle{definition}
\newtheorem{definition}[equation]{Definition}
\newtheorem{note}[definition]{Note}
 
\theoremstyle{theorem}
\newtheorem{theorem}[definition]{Theorem}
\newtheorem{lemma}[theorem]{Lemma}
\newtheorem{proposition}[theorem]{Proposition}
\newtheorem{corollary}[theorem]{Corollary}


\newcommand{\bE}{\mathbb{E}}
\newcommand{\bF}{\mathbb{F}}
\newcommand{\bG}{\mathbb{G}}
\newcommand{\bH}{\mathbb{H}}

\newcommand{\bL}{\mathbb{L}}

\newcommand{\bQ}{\mathbb{Q}}
\newcommand{\bR}{\mathbb{R}}
\newcommand{\bS}{\mathbb{S}}

\newcommand{\bZ}{\mathbb{Z}}

\newcommand{\calA}{{\mathcal{A}}}

\newcommand{\calE}{{\mathcal{E}}}
\newcommand{\calF}{{\mathcal{F}}}

\newcommand{\calK}{{\mathcal{K}}}

\newcommand{\calO}{{\mathcal{O}}}

\newcommand{\calS}{{\mathcal{S}}}

\newcommand{\calX}{{\mathcal{X}}}

\DeclareMathAlphabet{\eus}{U}{eus}{m}{n}
\let\catsymbfont\eus

\newcommand{\aA}{{\catsymbfont{A}}}

\newcommand{\aF}{{\catsymbfont{F}}}

\newcommand{\aS}{{\catsymbfont{S}}}

\newcommand{\al}{\alpha}

\newcommand{\epz}{\varepsilon}

\newcommand{\phz}{\varphi}

\newcommand{\ka}{\kappa}

\newcommand{\ta}{\tau}

\newcommand{\GA}{\Gamma}

\newcommand{\SI}{\Sigma}

\newcommand{\Ab}{\ensuremath{ \aA b }}

\newcommand{\Sp}{\ensuremath{ \aS {\rm p} }}

\newcommand{\Fin}{\ensuremath{ \aF in }}

\newcommand{\Sh}{{\rm Sh}}
\newcommand{\Hyp}{{\rm Hyp}}
\newcommand{\spec}{{\rm spec\ }}
\newcommand{\et}{\textup{\'et}}

\newcommand{\ox}{\otimes}   
\newcommand{\op}{\oplus}    
\newcommand{\w}{\wedge}    

\newcommand{\ef}{E_\infty}

\let\overto\xrightarrow

\def\quickop#1{\expandafter\newcommand\csname #1\endcsname{\operatorname{#1}}}
\quickop{map} \quickop{Map} \quickop{Hom} \quickop{End} \quickop{Aut} \quickop{Tel} \quickop{Mic}
\quickop{Ext} \quickop{Tor} \quickop{Id} \quickop{Coker} \quickop{Ker}
\quickop{Lim} \quickop{Colim} \quickop{Holim} \quickop{Hocolim}
\quickop{id} \quickop{tel} \quickop{mic} \quickop{coker}
\quickop{holim} \quickop{hocolim} \quickop{im}
\quickop{Ho} 
\quickop{Fib} \quickop{Cof}

\newcommand{\colim}{\operatorname*{colim}}

\title{K-theoretic Tate-Poitou duality at prime 2}
\author{Myungsin Cho}
\address{Department of Mathematics, Indiana University, Bloomington, IN \ 47405}
\email{myuncho@iu.edu}
\date{}

\begin{document}
\maketitle

\begin{abstract}
We extend the result of Blumberg and Mandell on K-theoretic Tate-Poitou duality at odd primes which serves as a spectral refinement of the classical arithmetic Tate-Poitou duality.
The duality is formulated for the $K(1)$-localized algebraic K-theory of the ring of $p$-integers in a number field and its completion using the $\mathbb{Z}_p$-Anderson duality.
This paper completes the picture by addressing the prime 2, where the real embeddings of number fields introduce extra complexities.
As an application, we identify the homotopy type at prime 2 of the homotopy fiber of the cyclotomic trace for the sphere spectrum in terms of the algebraic K-theory of the integers.
\end{abstract}

\section{Introduction}
Algebraic K-theory is an invariant of a ring that appears in many branches of mathematics, such as number theory, algebraic geometry, geometric topology and differential topology.
For instance, Kurihara \cite{MR1145807} showed that the vanishing of $K_{4n}(\bZ)$ is equivalent to proving the Kummer-Vandiver conjecture in number theory.
On the other hand, on the topological side, the algebraic K-theory of the sphere spectrum $\bS$  is a fundamental object to study in both algebraic and differential topology because its direct summand, the smooth Whitehead spectrum $Wh^\text{diff}(\ast)$, describes differential topology of high dimensional highly connected manifolds \cite{MR3202834}.

Let $F$ be a number field and $p$ be a prime integer.
Then each prime $\nu$ in the ring of integers $\calO_F$ lying over $p$ defines a valuation and so a completion map $\calO_F[\tfrac{1}{p}] \to F_\nu$.
Let $\ka$ be the induced map on $K(1)$-local algebraic K-theory
 \[\ka \colon L_{K(1)}K(\calO_F[\tfrac{1}{p}]) \to \prod_{\nu | p} L_{K(1)}K(F_\nu),\]
 where $K(1)$ is the first Morava K-theory.
When $p$ is an odd prime, the homotopy type of the homotopy fiber $\Fib(\ka)$ was determined by the work of Blumberg and Mandell which gives a spectral refinement of the arithmetic Tate-Poitou duality \cite[I.4.10]{MR2261462}.
\begin{theorem}[\cite{MR4121155}]\label{theoremodd}
 If $p$ is odd, there is a canonical weak equivalence between the homotopy fiber $\Fib(\ka)$ and $\SI^{-1} I_{\bZ_p} L_{K(1)}K(\calO_F[\tfrac{1}{p}])$, the desuspension of the $\bZ_p$-Anderson dual spectrum of the $K(1)$-local algebraic K-theory of $\calO_F[\tfrac{1}{p}]$.
 The weak equivalence is adjoint to the map
 \[ \Fib(\ka) \w L_{K(1)}K(\calO_F[\tfrac{1}{p}]) \to \SI^{-1}I_{\bZ_p}\bS\]
 induced by the $L_{K(1)}K(\calO_F[\tfrac{1}{2}])$-action on $\Fib(\ka)$ followed by a canonical map $u_{\calO_F}\colon \Fib(\ka)  \to \SI^{-1} I_{\bZ_p}\bS$.
\end{theorem}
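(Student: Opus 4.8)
The plan is to realize the asserted equivalence as the spectrum-level shadow of classical Poitou--Tate duality for the arithmetic curve $\spec\calO_F[\tfrac1p]$, by combining \'etale descent for $K(1)$-local K-theory with the arithmetic duality. First I would pass from algebraic to \'etale K-theory: by Thomason-style descent together with the Quillen--Lichtenbaum conjecture (a theorem via the norm residue isomorphism), there are natural equivalences $L_{K(1)}K(\calO_F[\tfrac1p])\simeq K^{\et}(\calO_F[\tfrac1p])^\wedge_p$ and $L_{K(1)}K(F_\nu)\simeq K^{\et}(F_\nu)^\wedge_p$ compatible with $\ka$, so that $\ka$ is modeled by the restriction map of $p$-adic \'etale sheaves. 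Each of these spectra carries a strongly convergent descent spectral sequence with $E_2^{s,t}=H^s_{\et}(-;\bZ_p(t))\Rightarrow\pi_{2t-s}$, strong convergence holding because $\calO_F[\tfrac1p]$ and the $F_\nu$ have finite mod-$p$ cohomological dimension (equal to $2$ for odd $p$, where the real places of $F$ contribute nothing because $H^*(\bR;\bZ_p(n))=0$). Consequently $\Fib(\ka)$ is identified with a compactly supported \'etale K-theory $K^{\et}_c(\calO_F[\tfrac1p])$ whose descent spectral sequence has $E_2$-page the compactly supported cohomology $H^s_c(\calO_F[\tfrac1p];\bZ_p(t))$ --- and at odd $p$ only the $p$-adic places enter the definition of compact support.

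Next I would construct the canonical trace $u_{\calO_F}\colon\Fib(\ka)\to\SI^{-1}I_{\bZ_p}\bS$. On homotopy it should realize the sum-of-invariants map of global class field theory $H^3_c(\calO_F[\tfrac1p];\bZ_p(1))\xrightarrow{\ \sim\ }\bZ_p$, which under the degree-$2$ Bott periodicity lies in $\pi_{-1}$; since $\pi_{-1}\SI^{-1}I_{\bZ_p}\bS\cong\bZ_p$, these maps (over all Tate twists, which are invisible $K(1)$-locally as they are implemented by powers of the Bott class) assemble into a map of spectra using the lax symmetric monoidal structure of $K(1)$-localization and the multiplicative structure of \'etale K-theory. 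The defining fiber sequence of $\Fib(\ka)$ makes it a module over the ring spectrum $L_{K(1)}K(\calO_F[\tfrac1p])$, giving a pairing $\Fib(\ka)\w L_{K(1)}K(\calO_F[\tfrac1p])\to\Fib(\ka)$; composing with $u_{\calO_F}$ yields the pairing in the statement, whose adjoint $\Fib(\ka)\to\SI^{-1}I_{\bZ_p}L_{K(1)}K(\calO_F[\tfrac1p])$ is the candidate equivalence.

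To finish I would show this adjoint is a weak equivalence by comparing descent spectral sequences. Feeding the descent spectral sequence of $K^{\et}(\calO_F[\tfrac1p])$ into the Anderson-duality universal coefficient exact sequence expresses the $E_2$-page of the target via $\Hom_{\bZ_p}(H^s(\calO_F[\tfrac1p];\bZ_p(t)),\bZ_p)$ and $\Ext^1_{\bZ_p}(H^{s+1}(\cdots),\bZ_p)$, which is exactly the Pontryagin dual of $H^{3-s}_c(\calO_F[\tfrac1p];\bQ_p/\bZ_p(1-t))$; the constructed pairing induces on $E_2$ precisely the classical Poitou--Tate perfect pairing between $H^s_c$ and $H^{3-s}$, an isomorphism, and compatibility with differentials together with strong convergence of both spectral sequences then forces the map of spectra to be an equivalence. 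The main obstacle is the previous step: producing $u_{\calO_F}$ and the duality pairing as honest maps of spectra rather than merely compatible families on homotopy groups, and checking that on associated graded they recover the arithmetic pairing. This means promoting the local invariant maps $H^2(F_\nu;\bZ_p(1))\cong\bZ_p$, the reciprocity sequence, and the nine-term Poitou--Tate sequence to a commuting diagram of spectra, while carefully tracking the $\Ext^1$/$\lim^1$ terms hidden inside the Anderson dual. The contrast with the present paper is precisely that at odd $p$ the real archimedean places drop out of every step, whereas at $p=2$ their local cohomology must be carried through the entire argument.
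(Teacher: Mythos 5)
Your proposal follows essentially the same route as the cited proof of Blumberg and Mandell \cite{MR4121155}, which this paper also recalls: pass to \'etale K-theory via Quillen--Lichtenbaum, identify $\Fib(\ka)$ with compactly supported \'etale K-theory (at odd $p$ simply global sections of $j_!$), produce a duality class from global class field theory, and compare descent spectral sequences against the Artin--Verdier/Poitou--Tate perfect pairing. The gap you correctly flag --- lifting $u_{\calO_F}$ to an honest map of spectra rather than a compatible family on homotopy groups --- is what Blumberg--Mandell resolve concretely, and what \autoref{s1} of the present paper adapts to $p=2$: rather than assembling invariant maps over Tate twists by lax monoidality, they use the collapse of the Thomason spectral sequence together with the Albert--Brauer--Hasse--Noether exact sequence to produce a canonical element of $\pi_{-1}(\Fib(\ka)\w M_{\bZ/p^\infty})$ from $\bQ_p/\bZ_p$ via the cokernel of the $\pi_0$-level completion map (using the canonical splitting coming from the unit $\bS\to K(k)$), and then pin it down by a retraction through the K-theory transfer for $\bQ\subset F$.
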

Related work in this direction appears in Clausen’s thesis \cite{MR3211458}, where he constructs a similar duality map connected to Gross-Hopkins duality. See also recent work of Braulig \cite{braunling}, which investigates analogous phenomena in the algebraic K-theory of locally compact modules.

The $\bZ_p$-Anderson dual of the sphere spectrum $I_{\bZ_p}\bS$ is defined as the $p$-completion of the desuspension of the Brown-Comenetz dual of the sphere spectrum
\[I_{\bZ_p}\bS =(\SI^{-1} I_{\bQ/\bZ}\bS)_p^\w\]
where the Brown-Comenetz dual spectrum $I_{\bQ/\bZ}\bS$ represents the Pontryagin dual of $\pi_0$.
Specifically, for any spectrum $X$, it satisfies
\[ [X, I_{\bQ/\bZ}\bS] \cong \Hom(\pi_{0} X, \bQ/\bZ) = (\pi_0 X)^\vee\]
 where $[-,-]$ denotes maps in the stable homotopy category and $(-)^\vee$ denotes the Pontryagin dual.
These, indeed, are homotopical refinements of the Pontryagin duality of finite abelian groups and satisfies analogous idempotent duality isomorphisms in appropriate subcategories \cite{MR405403,MR388375}.
The \autoref{theoremodd} gives rise to an isomorphism
\begin{align*}
 \pi_n \Fib(\ka)  & \cong \pi_{n} (\SI^{-1} I_{\bQ/\bZ} (L_{K(1)} K(\calO_F[\tfrac{1}{p}]) \w M_{\bZ/p^\infty}))   \\
 		& \cong (\pi_{-1-n} (L_{K(1)} K(\calO_F[\tfrac{1}{p}]) \w M_{\bZ/p^\infty}) )^\vee,
\end{align*}
where $M_{\bZ/p^\infty}$ denotes the Moore spectrum for $\bZ/p^\infty$.
Hence, the long exact sequence of homotopy groups associated to the fiber sequence of the map $\ka$ recovers the 9-term Tate-Poitou long exact sequence.

When $p=2$, the methods of \cite{MR4121155} apply only when $F$ is totally imaginary.
The purpose of this paper is to extend this result to prime 2 without this assumption on $F$.

\begin{theorem}[Two-primary K-theoretic Tate-Poitou duality]\label{mainthm}
Let $F$ be a number field, $\calO_F$ its ring of integers and $\calS$ the set of finite primes of $\calO_F$ above 2.
There is a canonical weak equivalence between the homotopy fiber $\Fib(\ka)$ of the completion map in $K(1)$-local algebraic K-theory
 \[\ka\colon L_{K(1)}K(\calO_F[\tfrac{1}{2}]) \to \prod_{\nu \in \calS} L_{K(1)}K(F_\nu)\]
and $\SI^{-1} I_{\bZ_2} L_{K(1)}K(\calO_F[\tfrac{1}{2}])$, the desuspension of the $\bZ_2$-Anderson dual spectrum of the $K(1)$-local algebraic K-theory of $\calO_F[\tfrac{1}{2}]$.
 The weak equivalence is adjoint to the map
 \[ \Fib(\ka) \w L_{K(1)}K(\calO_F[\tfrac{1}{2}]) \to \SI^{-1}I_{\bZ_2}\bS\]
 induced by the $L_{K(1)}K(\calO_F[\tfrac{1}{2}])$-action on $\Fib(\ka)$ followed by a canonical map $u_{\calO_F}\colon \Fib(\ka)  \to \SI^{-1} I_{\bZ_2}\bS$, constructed as (\ref{dualityclass}).
\end{theorem}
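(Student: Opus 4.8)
The plan is to prove that the map adjoint to the pairing of (\ref{dualityclass}) is a weak equivalence by reducing it to a duality statement in Galois cohomology. Since $\Fib(\ka)$ and $\SI^{-1}I_{\bZ_2}L_{K(1)}K(\calO_F[\tfrac{1}{2}])$ are both $K(1)$-local, hence $2$-complete, it suffices to check that the induced map is an isomorphism on mod $2$ homotopy groups. For this I would bring in the descent (homotopy fixed point) spectral sequences of Thomason: with $X=\mathrm{Spec}\,\calO_F[\tfrac{1}{2}]$, the spectrum $L_{K(1)}K(\calO_F[\tfrac{1}{2}])$ carries a descent spectral sequence with $E_2^{s,2i}=H^s_{\et}(X;\bZ_2(i))$; each $L_{K(1)}K(F_\nu)$, $\nu\in\calS$, has $E_2^{s,2i}=H^s(G_{F_\nu};\bZ_2(i))$, concentrated in $0\le s\le 2$ because $\mathrm{cd}_2 F_\nu=2$; and $\Fib(\ka)$ acquires a spectral sequence whose $E_2$-page is the étale cohomology of $X$ with compact supports towards $\calS$, namely the (shifted) fibre of the restriction map to $\bigoplus_{\nu\in\calS}H^s(G_{F_\nu};\bZ_2(i))$.

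The local ingredient is $2$-primary local Tate duality: for each $\nu\in\calS$ the cup product together with the local invariant $H^2(G_{F_\nu};\bZ_2(1))\to\bZ_2$ identifies $L_{K(1)}K(F_\nu)$ with a shift of its own $\bZ_2$-Anderson dual, and it is through this local pairing that the canonical class $u_{\calO_F}$ of (\ref{dualityclass}) factors; thus $u_{\calO_F}$ is the $K$-theoretic shadow of the reciprocity law "the sum of the local invariants vanishes." Plugging this into the long exact sequence of the defining fibre sequence of $\ka$, the theorem is reduced to the assertion that the pairing carries $\pi_*\Fib(\ka)$ isomorphically onto the $\bZ_2$-linear dual of $\pi_*L_{K(1)}K(\calO_F[\tfrac{1}{2}])$, i.e. to a perfect duality between $H^*_{\et}(X;\bZ_2(i))$ and the compactly supported étale cohomology of $X$.

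This last statement is the $2$-primary Artin--Verdier / Poitou--Tate duality for $\mathrm{Spec}\,\calO_F[\tfrac{1}{2}]$ \cite{MR2261462}, and this is where the real embeddings intervene. When $F$ is not totally imaginary, $\mathrm{cd}_2 X=\infty$: in every sufficiently large cohomological degree $s$ the $E_2$-page picks up a copy of the Tate cohomology $\hat H^s(\mathrm{Gal}(\bC/\bR);\bZ_2(i))$ for each real place, so the descent spectral sequence no longer collapses and one must establish its (strong) convergence, with the differentials hitting these classes forced by the $KO$-pattern (near a real place $L_{K(1)}K(F_\nu)\simeq KO^{\wedge}_2$). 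Furthermore the compact support occurring in Artin--Verdier is modified at all archimedean places, whereas $\ka$ only sees the finite places in $\calS$; the discrepancy is controlled by the exact triangle relating $\Fib(\ka)$, the full compactly supported $K$-theory spectrum of $X$, and $\prod_{v\mid\infty}L_{K(1)}K(F_v)$, combined with the fact that $KO^{\wedge}_2$ is itself $\bZ_2$-Anderson self-dual up to a shift — the $8$-fold periodicity of $\pi_*KO^{\wedge}_2$ being the spectral avatar of the $2$-periodicity of the Tate cohomology of $\mathrm{Gal}(\bC/\bR)$. Matching the spectrum-level pairing with the arithmetic one is then a naturality and multiplicativity check on $E_2$-pages, giving an isomorphism on $E_\infty$ and hence the asserted equivalence.

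I expect the main obstacle to be precisely this archimedean bookkeeping: proving convergence of the non-degenerate descent spectral sequence at the prime $2$, and verifying that the class $u_{\calO_F}$ built in (\ref{dualityclass}) does induce the correctly (Tate-cohomology) modified duality at the real places, so that the finite-support cohomology computed by $\Fib(\ka)$ matches the $\bZ_2$-Anderson dual of $L_{K(1)}K(\calO_F[\tfrac{1}{2}])$ exactly rather than up to an archimedean error term. As a fallback I would keep in mind the option of deducing the general case from the totally imaginary one already treated in \cite{MR4121155}: base change along the totally imaginary quadratic extension $F(\sqrt{-1})/F$ and descend, exploiting Galois descent for $K(1)$-local $K$-theory and the vanishing of $K(1)$-local Tate constructions (so that for the relevant $C_2$-action homotopy orbits agree with homotopy fixed points and commute with $I_{\bZ_2}$), at the cost of identifying $u_{\calO_F}$ with the class that this descent produces from $u_{\calO_{F(\sqrt{-1})}}$.
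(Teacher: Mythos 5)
Your reduction strategy — reduce to a perfect pairing of descent spectral sequences whose $E_2$-pages match an arithmetic Tate--Poitou duality — is the right shape and matches the paper at a coarse level, but two pieces are missing or wrong.

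First, you propose checking the adjoint map on \emph{mod 2} homotopy groups. The paper's pairing argument needs a multiplicative smashing, and $M_{\bZ/2}$ admits no unital multiplication at the prime 2. The paper instead smashes with $M_{\bZ/2^n}$ for $n\ge3$ (where an $\bE_1$-structure exists) and proves a small lemma (\autoref{moore}) showing that any such multiplication makes $M_{\bZ/2^n}$ self-dual against $M_{\bZ/2^\infty}$; this is what lets the mod-$2^n$ pairing recognize the Anderson dual. Your reduction to mod 2 homotopy groups breaks at exactly this step.

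Second, and more seriously, you gesture at "the 2-primary Artin--Verdier / Poitou--Tate duality for $\spec\calO_F[\tfrac12]$" as the input that makes the $E_2$-pairing perfect, and you propose to patch up the archimedean discrepancy by an exact triangle involving compactly supported $K$-theory and the self-Anderson-duality of $KO^\wedge_2$. This is precisely where the real work lies, and it is not supplied. The naive pairing on $H^\ast_\et(\calO_F;j_!\bZ/2^n(t))$ is \emph{not} perfect at 2 (the mod-2 \'etale cohomological dimension of $\calO_F$ is infinite, and (\ref{les}) shows $H^\ast_\et(\calO_F;j_!M)$ differs from $H^\ast_c$ by Tate-cohomology terms at the real places), so one needs a concrete cohomological model on which a \emph{perfect} pairing converging in degree 3 exists and which still computes $\pi_\ast\Fib(\ka)$ via a descent spectral sequence. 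The paper's mechanism for this is Zink's completed \'etale site $\overline{Y}_\et$: there the real points behave as geometric points, $\operatorname{cd}_2\overline{Y}_\et\le3$, and the Artin--Verdier--Zink pairing (\ref{avzduality}) on $\bH^\ast(\overline{Y}_\et;\widehat{j_Y}j_!\bZ/2^n(t))$ is perfect. One then proves (\autoref{fibashypshf}) that $\Fib(\ka)$ is exactly the global sections of the modified hypersheaf $\widehat{j_Y}j_!\calK$, using the vanishing of the Tate construction on $KU^\wedge_2$. Your proposal never produces a concrete cohomology theory in which both a perfect pairing and a convergent descent spectral sequence for $\Fib(\ka)$ coexist, and the $KO$-self-duality remark alone does not manufacture such a theory.

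Your fallback via descent along $F(\sqrt{-1})/F$ is a genuinely different route and is not what the paper does. It is plausible in outline (Galois descent for $K(1)$-local $K$-theory, vanishing of the $C_2$-Tate construction on $KU^\wedge_2$ so fixed points agree with orbits), but you would still need to verify that $u_{\calO_F}$ is the class obtained by descent from $u_{\calO_{F(\sqrt{-1})}}$, and that the Anderson-dual functor commutes with the relevant homotopy limit; neither is trivial, and you would lose the explicit cohomological comparison that identifies the pairing with the arithmetic one degree by degree. As written the proposal has a genuine gap.
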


The main obstruction to extending the Blumberg-Mandell's work to the $p=2$ case is the presence of real embeddings.
In the case of odd primes, for $R = \calO_F[\tfrac{1}{p}]$ or $F_\nu$, the Thomason spectral sequence \cite{Thomason}
\begin{equation}\label{thomasonss}
 E_2^{s,t} =  H^s_\et(R;\bZ/p^n(-\tfrac{t}{2})) \Longrightarrow \pi_{t-s}( L_{K(1)} K(R) \w M_{\bZ/p^n})
\end{equation} 
collapses at $E_2$-page for degree reasons with trivial extensions.
This gives a canonical isomorphism between the homotopy groups of the $K(1)$-localization of algebraic K-theory spectra and Jannsen’s continuous \'etale cohomology groups:
\begin{equation}\label{thomasonssodd}
 \pi_n L_{K(1)}K(R) \cong 
\begin{cases}
 H^0_\et(R; \bZ_p(\tfrac{n}{2})) \op H^2_\et(R; \bZ_p(\tfrac{n}{2}+1)) & n \text{ even} \\
 H^1_\et(R; \bZ_p(\tfrac{n+1}{2}))  & n \text{ odd}
\end{cases}
\end{equation}
With the above identification, Albert-Brauer-Hasse-Noether sequence for $\calO_F[\tfrac{1}{p}]$ then provides the construction of the canonical map $u_{\calO_F}$ in the \autoref{theoremodd}; see \cite[1.7]{MR4121155}.
The other essential algebraic input of Blumberg-Mandell is the specific form of Artin-Verdier duality at odd primes 
\begin{equation}
\begin{split}
H^p_\et(\calO_F; j_! \bZ/p^n(t)) \ox H^{3-p}_\et(\calO_F[\tfrac{1}{p}], \bZ/p^n(1-t)) \qquad \\
 \to H^3_\et(\calO_F; \bG_{m}) \cong \bQ/\bZ
\end{split}
\end{equation}
 which arises from the isomorphism between the \'etale cohomology $H^\ast_\et(\calO_F;j_! \bZ/p^n(t))$ and \'etale cohomology groups with compact support $H^\ast_c(\calO_F[\tfrac{1}{p}];\bZ/p^n(t))$ where $j\colon \spec \calO_F[\tfrac{1}{p}] \to \spec \calO_F$ is the canonical open embedding.
In the 2-primary case, when $F$ admits a real embedding, both observations fail.
The mod 2 \'etale cohomological dimension of $\calO_F$ is infinite and the spectral sequence (\ref{thomasonss}) no longer collapses at the $E_2$-page, but it collapses at the $E_4$-page; see \cite[2.10]{MR1803955}.
Therefore the homotopy groups $\pi_\ast L_{K(1)}K(\calO_F)$ no longer admit a simple \'etale cohomological description as in the odd prime case (\ref{thomasonssodd}).
Instead, the spectral sequences provides a description in terms of subquotients, subject to extension problems; see \cite[0.6]{MR1697095} and \cite[2.17]{MR1803955}.
Furthermore, the \'etale cohomology group $H^\ast_\et(\calO_F;j_!M)$ differs from the compactly supported cohomology $H^\ast_c(\calO_F[\tfrac{1}{p}];M)$ by  a direct sum $\bigoplus_{\nu} H^n_{T}(\bR;\nu_\ast\calA)$ through the long exact sequence:
\begin{equation} \label{les}
 \cdots  \to H^\ast_c(\calO_F[\tfrac{1}{p}];M) \to H^\ast_\et(\calO_F;j_!M) \to  \bigoplus_{\nu} H^n_{T}(\bR;\nu_\ast\calA) \to \cdots 
\end{equation}
where the later term is Tate cohomology group over real embeddings  $\nu\colon \calO_F \to \bR$ which vanishes in the case of odd primes \cite[II.2.3]{MR2261462}.
Both of these phenomena are closely tied to the presence of real embeddings, and their étale cohomological behavior is characterized by Tate’s theorem.
\begin{theorem}[{\cite[3.1(c)]{MR0175892}}]\label{tate}
For a 2-torsion sheaf $\calA$ on $(\spec \calO_F)$
the map 
\[ H^n_{\et}(\calO_F[\tfrac{1}{2}];\calA ) \to \bigoplus_{\nu} H^n_{\et}(\bR;\nu_\ast\calA)\]
induced by the real embedding $\nu\colon \calO_F \to \bR$, is an isomorphism on $n\ge 3$.
\end{theorem}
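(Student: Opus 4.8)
The plan is to translate the statement into Galois cohomology and deduce it from Tate-Poitou duality. First I would reduce to locally constant coefficients. Since \'etale cohomology and the formation of geometric stalks commute with filtered colimits of sheaves on a noetherian scheme, I may assume $\calA$ constructible; then the exact sequence $0\to j_!j^\ast\calA\to\calA\to i_\ast i^\ast\calA\to 0$, with $j$ the inclusion of a dense open $V$ on which $\calA$ is locally constant and $i$ the inclusion of the complementary finite set of closed points, reduces the problem to the skyscraper summand $i_\ast i^\ast\calA$ (which has cohomology concentrated in degrees $\le 1$ because the residue fields are finite, and has vanishing stalks at the real places, hence is harmless for $n\ge 2$) and to $\calA=j_!\calF$ with $\calF$ locally constant on $V$. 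For the latter a localization sequence identifies $H^n_\et(\calO_F[\tfrac{1}{2}];j_!\calF)$ with $H^n_\et(V;\calF)$ for $n\ge 4$, the discrepancy being, at each of the finitely many finite primes $z\notin V$, the local Galois cohomology of $\calF$ in degree $n-1$, which vanishes for $n\ge 4$ because local Galois groups have cohomological dimension $2$; the right-hand side is unchanged. So, apart from a point at $n=3$ taken up below, it suffices to prove that for a finite $2$-torsion module $A$ over $G_S=\operatorname{Gal}(F_S/F)$, with $S$ the set of archimedean places of $F$ together with the primes above $2$ and $F_S$ the maximal extension of $F$ unramified outside $S$, the localization map $H^n(G_S;A)\to\bigoplus_{v\in S}\hat H^n(G_v;A)$ is an isomorphism for $n\ge 3$, where $\hat H$ denotes Tate cohomology of the local Galois groups $G_v$. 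For $n\ge 3$ only the real places contribute on the right, since complex places have trivial Galois group and finite places have cohomological dimension $2$.

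I would then introduce the compactly supported cohomology $H^\ast_c(G_S;A)$ via the long exact sequence
\[\cdots\to H^n_c(G_S;A)\to H^n(G_S;A)\to\bigoplus_{v\in S}\hat H^n(G_v;A)\to H^{n+1}_c(G_S;A)\to\cdots\]
and invoke Tate-Poitou duality in the form $H^n_c(G_S;A)\cong H^{3-n}(G_S;A')^\vee$, where $A'=\Hom(A,\mu_{2^{\infty}})$ is the Cartier dual and $(-)^\vee$ denotes the Pontryagin dual. In particular $H^n_c(G_S;A)=0$ for $n\ge 4$, so the long exact sequence gives the isomorphism at once for $n\ge 4$.

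For $n=3$, surjectivity of $H^3(G_S;A)\to\bigoplus_{v\in S}\hat H^3(G_v;A)$ follows from $H^4_c(G_S;A)=0$, while injectivity is equivalent to surjectivity of the connecting map $\bigoplus_{v\in S}\hat H^2(G_v;A)\to H^3_c(G_S;A)$. Under the identification $H^3_c(G_S;A)\cong H^0(G_S;A')^\vee$ this is exactly the surjectivity at the terminal term of the nine-term Tate-Poitou exact sequence, whose last stretch reads $H^2(G_S;A)\to\bigoplus_{v\in S}\hat H^2(G_v;A)\to H^0(G_S;A')^\vee\to 0$. The point deferred in the first paragraph, namely surjectivity of $H^2_\et(V;\calF)\to\bigoplus_{z\notin V}H^2(F_z;\calF)$, follows from the same package: by local Tate duality at a prime above $2$ (which lies in $V$) together with the global reciprocity law, any family of local classes at the primes $z\notin V$ can be corrected at a prime above $2$ into a globally liftable one. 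If $F$ is totally imaginary there are no real places, both sides are zero, and the assertion is just the vanishing $\operatorname{cd}_2(\calO_F[\tfrac{1}{2}])=2$.

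The substance of the argument is Tate-Poitou duality, which I would cite rather than reprove; the steps that genuinely demand care are the d\'evissage to locally constant coefficients and the endpoint $n=3$, where the collapse $H^{\ge 4}_c=0$ does not by itself suffice and the exactness of the nine-term sequence is used essentially. An alternative that bypasses the d\'evissage is to run the same argument directly with \'etale cohomology on the Artin-Verdier compactification of $\spec\calO_F[\tfrac{1}{2}]$, using Artin-Verdier duality for constructible $2$-torsion sheaves.
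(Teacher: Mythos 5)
The paper does not prove this theorem; it is cited verbatim from Tate's 1962 ICM address \cite[3.1(c)]{MR0175892} and then used as a black box. Your Poitou--Tate argument---compactly supported cohomology $H^\ast_c$, the duality $H^i_c(G_{S'};A)\cong H^{3-i}(G_{S'};A')^\vee$ giving $H^{\geq 4}_c=0$, the nine-term sequence for the endpoint $n=3$, and the d\'evissage to $j_!\calF$ with $\calF$ locally constant---is essentially the standard proof of Tate's theorem, and is correct in substance. Two small repairs: the auxiliary set of places must grow with $V$ (it is $S'=\{\text{arch}\}\cup\{v\mid 2\}\cup(\spec\calO_F[\tfrac{1}{2}]\setminus V)$, not the minimal $S$ you fixed), and the parenthetical ``a prime above $2$ (which lies in $V$)'' is backwards: such primes lie outside $U=\spec\calO_F[\tfrac{1}{2}]$, hence outside $V$. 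What actually makes your correction step work is that they lie outside $Z=U\setminus V$, so adjusting the local class at $v_0\mid 2$ leaves the components over $Z$ untouched, and the relevant map $\hat H^2(G_{v_0};A)\to H^0(G_{S'};A')^\vee$ is onto because $(A')^{G_{S'}}\hookrightarrow(A')^{G_{v_0}}$ (note this injectivity argument needs a finite place; at a real place local duality produces $\hat H^0(C_2;A')$, not $H^0$, so your choice of a $2$-adic place is the right one). Your closing remark is also on target: the paper's framework of Zink's completed \'etale site $\overline{Y}_\et$ is exactly the Artin--Verdier compactification picture you propose as an alternative route.
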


As in \cite{MR4121155}, the immediate corollary of the \autoref{mainthm} for the $F =\bQ$ case is the identification of the homotopy fiber of the cyclotomic trace of the sphere spectrum.
\begin{corollary}\label{maincoro}
  The connective cover of the homotopy fiber of the cyclotomic trace $\ta_\bS\colon K(\bS)_2^\w \to TC(\bS)_2^\w$ is canonically weakly equivalent to  $\SI^{-1} I_{\bZ_2}\left(L_{K(1)} K(\bZ)\right)$, the connective cover of the desuspension of the $\bZ_p$-Anderson dual of the $K(1)$-localized algebraic K-theory of the integers.
  
\end{corollary}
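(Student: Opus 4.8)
The plan is to deduce the corollary from \autoref{mainthm} for $F=\bQ$ together with the Dundas--Goodwillie--McCarthy theorem and the Quillen--Lichtenbaum property at $2$, following the odd-primary argument of \cite{MR4121155} with the modifications forced by the prime $2$.

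Since the unit map $\bS\to H\bZ$ is an isomorphism on $\pi_0$, the Dundas--Goodwillie--McCarthy theorem makes the square relating $K$ and $TC$ of $\bS$ and of $\bZ$ homotopy cartesian, so $\Fib(\ta_\bS)\htp\Fib(\ta_\bZ)$ for the cyclotomic trace $\ta_\bZ\colon K(\bZ)\to TC(\bZ)$. Next I would invoke the standard fact that $THH(\bZ)_2^\w\htp THH(\bZ_2)_2^\w$ as cyclotomic spectra, so that $TC(\bZ)_2^\w\xrightarrow{\htp}TC(\bZ_2)_2^\w$, and compute $\Fib(\ta_{\bZ_2})_2^\w$: applying Dundas--Goodwillie--McCarthy to the reductions $\bZ_2\to\bZ/2^n$, taking the limit and using continuity of $K$ and $TC$, together with $K(\bF_2)_2^\w\htp H\bZ_2$ and the facts that the trace $K(\bF_2)_2^\w\to TC(\bF_2)_2^\w$ is an isomorphism on $\pi_0$ while $\pi_{-1}TC(\bF_2)_2^\w\iso\bZ_2$, one gets $\Fib(\ta_{\bZ_2})_2^\w\htp\SI^{-2}H\bZ_2$. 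Comparing the two cyclotomic traces through the equivalence $TC(\bZ)_2^\w\htp TC(\bZ_2)_2^\w$ then yields a fiber sequence
\[ \Fib\!\big(K(\bZ)_2^\w\to K(\bZ_2)_2^\w\big)\ \longrightarrow\ \Fib(\ta_\bZ)_2^\w\ \longrightarrow\ \SI^{-2}H\bZ_2. \]

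The dévissage/localization cofiber sequences $K(\bF_2)\to K(\bZ)\to K(\bZ[\tfrac12])$ and $K(\bF_2)\to K(\bZ_2)\to K(\bQ_2)$ share their fiber term compatibly, so $\Fib(K(\bZ)_2^\w\to K(\bZ_2)_2^\w)\htp\Fib(K(\bZ[\tfrac12])_2^\w\to K(\bQ_2)_2^\w)$; and since $\SI^{-2}H\bZ_2$ is concentrated in degree $-2$, taking connective covers in the fiber sequence above gives $\ta_{\ge0}\Fib(\ta_\bS)_2^\w\htp\ta_{\ge0}\Fib(K(\bZ[\tfrac12])_2^\w\to K(\bQ_2)_2^\w)$. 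By the Quillen--Lichtenbaum property at $2$ the maps $K(\bZ[\tfrac12])_2^\w\to L_{K(1)}K(\bZ[\tfrac12])$ and $K(\bQ_2)_2^\w\to L_{K(1)}K(\bQ_2)$ are isomorphisms on $\pi_n$ for $n\ge1$, and using $H^2_\et(\bZ[\tfrac12];\bZ_2(1))=0$ and the injectivity of $\pi_0 K(\bQ_2)_2^\w\to\pi_0 L_{K(1)}K(\bQ_2)$ one checks they are isomorphisms on $\pi_0$ as well; hence the fibers of these maps and the total fiber of the resulting square are coconnective, and
\[ \ta_{\ge0}\Fib\!\big(K(\bZ[\tfrac12])_2^\w\to K(\bQ_2)_2^\w\big)\ \htp\ \ta_{\ge0}\Fib(\ka), \]
where $\ka$ is the completion map of \autoref{mainthm} for $F=\bQ$. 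Finally \autoref{mainthm} identifies $\Fib(\ka)\htp\SI^{-1}I_{\bZ_2}L_{K(1)}K(\bZ[\tfrac12])\htp\SI^{-1}I_{\bZ_2}L_{K(1)}K(\bZ)$, the last equivalence because $K(\bF_2)_2^\w\htp H\bZ_2$ is $K(1)$-acyclic. All maps used are natural (the cyclotomic trace, the localization/transfer maps, $K(1)$-localization), so the composite equivalence is canonical.

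The delicate point is the Quillen--Lichtenbaum step. At the prime $2$ the Thomason spectral sequence for $\bZ[\tfrac12]$ collapses only at $E_4$ and $\bZ[\tfrac12]$ has infinite mod-$2$ étale cohomological dimension, so one must check the comparison map is an isomorphism in exactly the low degrees ($0$ and $1$) that govern the connective cover; the saving grace is that the extra cohomology classes coming from the real place of $\bQ$ live in high cohomological degree by \autoref{tate} and therefore affect only the negative homotopy groups. Indeed $\Fib(\ta_\bS)_2^\w$ has trivial $\pi_{-1}$ while $\SI^{-1}I_{\bZ_2}L_{K(1)}K(\bZ)$ does not, so the identification truly requires passing to connective covers, exactly as stated.
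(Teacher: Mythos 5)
Your argument follows the same route the paper (and the introduction of Blumberg–Mandell that it cites) takes: Dundas to pass from $\bS$ to $\bZ$, Hesselholt–Madsen to identify $TC(\bZ)_2^\w\simeq TC(\bZ_2)_2^\w$, continuity plus Dundas–Goodwillie–McCarthy to get $\Fib(\ta_{\bZ_2})_2^\w\simeq\SI^{-2}H\bZ_2$, the compatible localization sequences to transfer to $\bZ[\tfrac12]\to\bQ_2$, and then the Quillen–Lichtenbaum comparison with $\Fib(\ka)$; the identification $L_{K(1)}K(\bZ)\simeq L_{K(1)}K(\bZ[\tfrac12])$ via $L_{K(1)}H\bZ_2\simeq\ast$ is also what the paper intends. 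This is the right decomposition and the global structure is correct.

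The one place where the details as written are off is the $\pi_0$ step. You assert that both comparison maps $K(\bZ[\tfrac12])_2^\w\to L_{K(1)}K(\bZ[\tfrac12])$ and $K(\bQ_2)_2^\w\to L_{K(1)}K(\bQ_2)$ are isomorphisms on $\pi_0$. For $\bQ_2$ this is false: the Thomason spectral sequence for a $2$-adic local field collapses at $E_2$, so $\pi_0 L_{K(1)}K(\bQ_2)\cong H^0_\et(\bQ_2;\bZ_2)\oplus H^2_\et(\bQ_2;\bZ_2(1))\cong\bZ_2\oplus\bZ_2$, while $\pi_0 K(\bQ_2)_2^\w\cong\bZ_2$; the comparison is injective but not surjective, and its cokernel $\bZ_2$ is precisely the term that pairs with $\pi_0\Fib(\ka)$ in the local duality. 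The argument is salvaged because you do not actually need this. Writing $T$ for the total fiber of the square, it suffices to have $\pi_n T=0$ for $n\ge -1$; since $\pi_n T=0$ for $n\ge 0$ follows from the $n\ge 1$ isomorphisms, the only remaining condition is $\pi_{-1}T=\ker\big(\coker(\pi_0 K(\bZ[\tfrac12])_2^\w\to\pi_0 L_{K(1)}K(\bZ[\tfrac12]))\to\coker(\pi_0 K(\bQ_2)_2^\w\to\pi_0 L_{K(1)}K(\bQ_2))\big)=0$, and this already holds once the \emph{global} cokernel vanishes, i.e.\ once $\pi_0 L_{K(1)}K(\bZ[\tfrac12])\cong\bZ_2$. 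For that, $H^2_\et(\bZ[\tfrac12];\bZ_2(1))=0$ (which is correct: it is the Tate module of the $\bZ/2$ Brauer group, hence $0$) is not by itself enough; one must also check that the contributions from $H^{4k}_\et(\bZ[\tfrac12];\bZ_2(2k))\cong\bZ/2$ in filtrations $4,8,\dots$ do not survive. This follows from Tate's theorem (\autoref{tate}) combined with the vanishing of all $E_\infty$-terms in filtration $\ge 3$ in the $C_2$-homotopy fixed point spectral sequence computing $\pi_0 KO_2^\w$, a comparison your last paragraph gestures at but does not carry out. So: drop the (false) $\pi_0$-isomorphism claim for $\bQ_2$, replace it with injectivity there, and spell out the $KO$ comparison to conclude $\pi_0 L_{K(1)}K(\bZ[\tfrac12])\cong\bZ_2$; with those repairs the argument closes.
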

\noindent This result follows from the K-theoretic reduction of the homotopy fiber $\Fib(\ta_\bS)$ to $\Fib(\ka_\bZ)$, building on the works of Dundas \cite{MR1607556} and Hesselholt-Madsen \cite{MR1410465} together with the (affirmed) Quillen-Lichtenbaum conjecture.
For a more detailed discussion, we refer the reader to the introduction of \cite{MR4121155}.

In the case of $F =\bQ$, the \autoref{mainthm} was proved by Blumberg-Mandell-Yuan \cite[5.3]{BMY} without identifying the weak equivalence.
This could be achieved by first resolving the spectra $K(\bZ)$ and $\Fib(\ka)$ as fiber sequences involving the connective topological K-theory spectra $ko$ and $ku$ using Rognes' result \cite{MR1923990}, and then analyzing these fiber sequences with the self Anderson duality of $KO$ and $KU$.
They left the complete statement as a conjecture \cite[5.4]{BMY} and identify the homotopy fiber of chromatic completion map of the K-theory of $p$-local sphere $\Fib(K(\bS_{(p)})_{(p)} \to \lim L_n K(\bS_{(p)}))$.
Our main result affirms this conjecture and proves the following corollary.
\begin{corollary}
The homotopy fiber $\Fib(K(\bS_{(2)})_{(2)} \to \lim_n L_n K(\bS_{(2)}))$ fits in a fiber sequence
\[ \Fib \to \bigvee_{\ell \ne 2} \SI^{-2} I_{\bZ/2^\infty} K(\bF_\ell) \to \SI^{-2} I_{\bZ/2^\infty} K(\bZ)\]
where the wedge is over primes $l\ne 2$, and the map $I_{\bZ/2^\infty} K(\bF_\ell) \to I_{\bZ/2^\infty} K(\bZ)$ is Brown-Comenetz dual to the map $K(\bZ) \to K(\bF_\ell)$ induced by $\bZ \to \bF_\ell$.
\end{corollary}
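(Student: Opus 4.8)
The plan is to derive the corollary from \autoref{mainthm} for $F = \bQ$ together with the structural reduction of Blumberg--Mandell--Yuan \cite{BMY}. In \cite[\S5]{BMY} the chromatic completion fiber $\calF := \Fib(K(\bS_{(2)})_{(2)} \to \lim_n L_n K(\bS_{(2)}))$ is analyzed --- via the cyclotomic trace (Dundas \cite{MR1607556}, Hesselholt--Madsen \cite{MR1410465}), the height-$1$ telescope conjecture, the affirmed Quillen--Lichtenbaum conjecture, and Rognes' resolutions of $K(\bZ)$ by $ko$ and $ku$ \cite{MR1923990}, the same circle of ideas that underlies \autoref{maincoro} --- and is exhibited there as the fiber of a map of spectra assembled from the $K(1)$-local algebraic $K$-theory of $\bZ$ and of the residue fields $\bF_\ell$, $\ell \neq 2$. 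What remains open in \cite{BMY}, their Conjecture~5.4, is exactly the identification of this connecting map. So the task is to supply that identification, and the point is that it is forced by the \emph{explicit} form of the duality equivalence in \autoref{mainthm}, which the bare equivalence of \cite[5.3]{BMY} does not provide.

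Concretely, I would feed \autoref{mainthm} with $F = \bQ$ into the reduction. Since $L_{K(1)} K(\bF_2) \htp \ast$ we have $L_{K(1)} K(\bZ) \htp L_{K(1)} K(\bZ[\tfrac12])$, and \autoref{mainthm} provides a canonical equivalence $\Fib(\ka_\bZ) \htp \SI^{-1} I_{\bZ_2} L_{K(1)} K(\bZ[\tfrac12])$ realized by the duality pairing through $u_\bZ$. The d\'evissage localization sequence $\bigvee_{\ell\neq 2} K(\bF_\ell) \to K(\bZ[\tfrac12]) \to K(\bQ)$, together with the fiber sequence $I_{\bZ_2} Y \to F(Y, H\bQ_2) \to I_{\bZ/2^\infty} Y$ relating the Anderson and Brown--Comenetz duals, and the Quillen--Lichtenbaum comparisons of $L_{K(1)} K(-)$ with $K(-)$ for $\bZ$ and for the $\bF_\ell$ in the surviving degrees, rewrites the desuspended $\bZ_2$-Anderson dual of \autoref{mainthm} --- once the rational contributions cancel, as in \cite{BMY} --- as the total fiber of
\[ \bigvee_{\ell\neq 2} \SI^{-2} I_{\bZ/2^\infty} K(\bF_\ell) \longrightarrow \SI^{-2} I_{\bZ/2^\infty} K(\bZ). \]
Matching this against \cite{BMY}'s presentation of $\calF$ gives the asserted fiber sequence, provided the connecting map is the expected one.

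The hard part is precisely to see that this connecting map is the Brown--Comenetz dual of the reduction maps $K(\bZ) \to K(\bF_\ell)$ induced by $\bZ \to \bF_\ell$, rather than some other map between the same spectra. This is where the explicit construction of $u_{\calO_F}$ matters: $u_{\calO_F}$ is built functorially out of the Albert--Brauer--Hasse--Noether sequence and Artin--Verdier duality (see (\ref{dualityclass})), so the duality equivalence of \autoref{mainthm} is compatible with the maps induced on $K(1)$-local algebraic $K$-theory by the ring maps $\bZ \to \bF_\ell$; taking Brown--Comenetz (equivalently $\bZ_2$-Anderson) duals of that compatibility identifies the connecting map with the stated one. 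Making this rigorous means tracking how the canonical duality class transforms under d\'evissage and $2$-completion and checking commutativity of the resulting squares; this naturality bookkeeping is the main obstacle, and it is exactly the step that promotes \cite[5.3]{BMY} to \cite[5.4]{BMY}, and hence yields the corollary.
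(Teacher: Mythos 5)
The paper itself contains no proof of this corollary: it simply asserts that the corollary follows from \autoref{mainthm} together with the analysis in \cite[\S5]{BMY}, where the fiber sequence is established conditionally on their Conjecture~5.4, which is exactly \autoref{mainthm} for $F=\bQ$. So there is nothing here for your argument to be compared against; the author delegates the entire deduction. Your proposal is in fact a reasonable reconstruction of that delegation — reduce via \cite{BMY} to a statement about $K(1)$-local $K$-theory of $\bZ[\tfrac12]$, then apply \autoref{mainthm} to supply the previously-missing identification of the duality equivalence — and this is almost certainly what the author has in mind.

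That said, as written your proposal does not yet constitute a proof: you explicitly defer the one step that distinguishes the corollary from \cite[5.3]{BMY}, namely the identification of the connecting map as the Brown--Comenetz dual of the reduction maps $K(\bZ) \to K(\bF_\ell)$. You characterize this as ``naturality bookkeeping,'' but it is precisely the content that promotes the conjecture to a theorem, and it involves nontrivial checks: one must verify that the passage from the $K(1)$-local dualities of \autoref{mainthm} to the integral statement in the corollary (using the localization sequence $\bigvee_{\ell\neq 2} K(\bF_\ell) \to K(\bZ[\tfrac12]) \to K(\bQ)$, the fact that the 2-primary Brown--Comenetz dual converts this to a fiber sequence only up to finiteness/convergence hypotheses, and the Quillen--Lichtenbaum comparison degree by degree) transforms the canonical duality class $u_{\bZ}$ into the dual of the ring maps $\bZ \to \bF_\ell$ rather than some automorphism twist thereof. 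You also need to address the transition from $K(\bZ[\tfrac12])$ back to $K(\bZ)$ at the level of \emph{integral} $K$-theory spectra (not merely $K(1)$-locally, where $K(\bF_2)$ drops out): after applying $I_{\bZ/2^\infty}$, the contribution of $K(\bF_2)$ is $H\bZ/2^\infty$ in degree $0$, so it is not negligible and must be tracked through the degree shifts. None of these points are fatal to your outline, but they are the substance of the argument, and as they stand they are asserted rather than carried out.
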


In this paper, the author resolves the difficulties arising at the prime 2 using the technique of the completed \'etale site introduced by Thomas Zink \cite{zink}.
Intuitively, this technique formally adjoins real points to the \'etale site $Y:=(\spec \calO_F)_\et$ to form a new Grothendieck site $\overline{Y_\et}$, called a completed \'etale site.
In this construction, one may think of $Y_\et$ as an open subset of $\overline{Y_\et}$ whose closed complement is given by the real embeddings $Y_\infty$:
\begin{equation}\label{openclosed}
 \xymatrix{
Y_\et \ar[r]^-{j}& \overline{Y}_\et & Y_\infty \ar[l]_-{i}
}\end{equation} 
This diagram induces recollement on the categories of sheaves of abelian groups (\ref{zinkrecol}).
Notably, in $\overline{Y_\et}$, the real points behave as if they were geometric points, and this site has cohomological dimension at most 3 for 2-torsion sheaves; this fact is equivalent to Tate's \autoref{tate}.
Within this formalism, Zink proved the Artin-Verdier-Zink duality theorem. 

This paper is organized as follows.
In \autoref{s1}, we review the K-theoretic Tate local duality theorem proved in \cite[1.4]{MR4121155} and provide a construction of the canonical map $u_{\calO_F}\colon \Fib(\ka) \to \SI^{-1} I_{\bZ_2}$.
In \autoref{s2}, we introduce Zink's construction on completed \'etale site and analyze hypersheaves of spectra.
We then give a hypercohomological description of the homotopy fiber $\Fib(\ka)$.
In \autoref{s3}, we construct spectral sequences associated to $\Fib(\ka)$ and $L_{K(1)}K(\calO_F[\tfrac{1}{2}])$, and prove the \hyperref[mainthm]{main theorem}.

\subsection*{Acknowledgements}
I am deeply grateful to my advisor, Michael Mandell, for suggesting this project and providing invaluable guidance.
I would also like to thank Taeyeup Kang, Sangmin Ko, Noah Riggenbach, and Ningchuan Zhang for their helpful discussions.
In particular, I am especially grateful to Ningchuan Zhang for his detailed comments and suggestions on revisions.
I also appreciate Oliver Brauling for his thoughtful feedback.
I thank the anonymous referee for helpful comments that improved the clarity of the paper.
This work was partially supported by a College of Arts + Sciences Dissertation Research Fellowship from Indiana University.
Additionally, support was provided in part by NSF grants DMS-1811820, DMS-2052846, and DMS-2104348.

\section{Local duality and construction of the duality class}\label{s1}
In this section, we review the K-theoretic Tate local duality theorem stated in \cite[\S1]{MR4121155}.
We also include a more detailed explanation of how the Thomason spectral sequence together with arithmetic Tate local duality yields an isomorphism between homotopy groups of $K(1)$-local algebraic K-theory of a local field, which suggests spectrum level duality.
After that, we construct the canonical map $u_{\calO_F}$ that appears in \autoref{mainthm} by analyzing the Thomason spectral sequence together with the Albert-Brauer-Hasse-Noether exact sequence.

\subsection{K-theoretic Tate local duality}
Let $k$ be a field of fractions of a complete discrete valuation ring whose residue field is finite of characteristic $p$.
The arithmetic Tate local duality states that the cup product in \'etale cohomology gives an isomorphism:
\begin{equation}\label{tatelocalduality}
 H^i_\et(k;M) \cong  (H^{2-i}_\et(k; M^\vee (1)))^\vee
\end{equation}
where $M$ is a finite Galois module and $(-)^\vee = \Hom(-,\bQ/\bZ)$ denotes the Pontryagin dual. 
This isomorphism together with Thomason's identification gives isomorphisms between homotopy groups of $K(1)$-local algebraic K-theory of the local field $k$ (possibly with coefficients), which can be lifted to a weak equivalence at the spectrum level.
More precisely, when $M = \bZ/p^m(n)$ and $m\to \infty$, the duality isomorphisms (\ref{tatelocalduality}) together with the identification (\ref{thomasonssodd}) gives abstract isomorphisms of homotopy groups: 
\begin{equation}\label{localiso}
\begin{split}
\pi_{n} L_{K(1)}K(k)& \cong \pi_{-n} (L_{K(1)}K(k)  \w M_{\bZ/p^\infty} )^\vee \\
&\cong \pi_{n} I_{\bQ/\bZ}(L_{K(1)}K(k) \w M_{\bZ/p^\infty}) \\
&\cong \pi_n I_{\bZ_p} (L_{K(1)}K(k)),
\end{split}
\end{equation}
where $M_{\bZ/p^\infty}$ denotes the Moore spectrum for $\bZ/p^\infty$.
Blumberg and Mandell lifted this isomorphism of homotopy groups to a weak equivalence between spectra and stated the K-theoretic local duality theorem:
\begin{theorem}[{\cite[1.4]{MR4121155}}]
The isomorphism of homotopy groups (\ref{localiso}) is induced from a canonical weak equivalence
\[L_{K(1)}K(k) \simeq I_{\bQ/\bZ}(L_{K(1)}K(k) \w M_{\bZ/p^\infty})\]
whose adjoint is the composite
 \[L_{K(1)}K(k)  \w L_{K(1)}K(k) \w M_{\bZ/p^\infty} \to L_{K(1)}K(k) \w M_{\bZ/p^\infty} \to I_{\bQ/\bZ}\bS\]
where the former map is the $\ef$ multiplication of K-theory and the latter map is a canonical map.
\end{theorem}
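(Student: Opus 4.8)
The plan is to build the required map out of the local invariant of class field theory and to prove it is an equivalence by a comparison of Thomason spectral sequences in which the isomorphism on $E_2$ is exactly Tate local duality (\ref{tatelocalduality}).

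\emph{Construction of the duality class.} By Thomason's theorem $L_{K(1)}K(k)$ is computed by the étale descent spectral sequence; for the local field $k$ (so $\operatorname{cd}_p k = 2$) this has $E_2^{s,t}=H^s_\et(k;\bZ/p^n(t/2))$, vanishes for $t$ odd, and therefore collapses at $E_2$ with a two-step filtration. Taking the colimit over $n$ gives the analogous spectral sequence with $\bQ_p/\bZ_p$-coefficients converging to $\pi_\ast(L_{K(1)}K(k)\w M_{\bZ/p^\infty})$; in particular there is a short exact sequence
\[ 0\to H^2_\et(k;\mu_{p^\infty})\to\pi_0(L_{K(1)}K(k)\w M_{\bZ/p^\infty})\to H^0_\et(k;\bQ_p/\bZ_p)\to 0. \]
I would take the canonical map $u_k\colon L_{K(1)}K(k)\w M_{\bZ/p^\infty}\to I_{\bQ/\bZ}\bS$ to be the one whose effect on $\pi_0$ is a homomorphism to $\bQ/\bZ$ restricting, on the subgroup $H^2_\et(k;\mu_{p^\infty})$, to the local invariant isomorphism $\operatorname{inv}_k\colon H^2_\et(k;\mu_{p^\infty})\xrightarrow{\ \sim\ }\bQ_p/\bZ_p\hookrightarrow\bQ/\bZ$. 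Such a map exists and is canonical: one model is the boundary map of the localization cofiber sequence $L_{K(1)}K(\kappa)\to L_{K(1)}K(\calO_k)\to L_{K(1)}K(k)$ (with $\kappa$ the residue field), smashed with $M_{\bZ/p^\infty}$, composed with the ``Frobenius trace'' $\Sigma L_{K(1)}K(\kappa)\w M_{\bZ/p^\infty}\to I_{\bQ/\bZ}\bS$; this realizes $\operatorname{inv}_k$ because the residue map on étale cohomology does.

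\emph{The adjoint and the spectral sequence comparison.} Using the $\ef$-multiplication, form $\mu_k\colon L_{K(1)}K(k)\to I_{\bQ/\bZ}(L_{K(1)}K(k)\w M_{\bZ/p^\infty})$ adjoint to $L_{K(1)}K(k)\w L_{K(1)}K(k)\w M_{\bZ/p^\infty}\to L_{K(1)}K(k)\w M_{\bZ/p^\infty}\xrightarrow{\,u_k\,}I_{\bQ/\bZ}\bS$. Since a $\pi_\ast$-isomorphism of spectra is a weak equivalence, it suffices to prove $\mu_k$ is a $\pi_\ast$-isomorphism. The target has $\pi_n=(\pi_{-n}(L_{K(1)}K(k)\w M_{\bZ/p^\infty}))^\vee$, so I would apply the (exact) Pontryagin dual to the $\bQ_p/\bZ_p$-coefficient descent spectral sequence; using finite-level Tate local duality $(H^s_\et(k;\bZ/p^n(j)))^\vee\cong H^{2-s}_\et(k;\bZ/p^n(1-j))$ and passing to the limit ($(H^s_\et(k;\bQ_p/\bZ_p(j)))^\vee\cong H^{2-s}_\et(k;\bZ_p(1-j))$), after reindexing $s\mapsto 2-s$ this dualized spectral sequence has, at $E_2$, the same terms as the descent spectral sequence computing $\pi_\ast L_{K(1)}K(k)$ (the Tate twists matching on associated graded). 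The $L_{K(1)}K(k)$-module multiplication induces the étale cup product on $E_2$, and together with the defining property of $u_k$ the map $\mu_k$ induces on $E_2$ exactly the pairing $x\mapsto(y\mapsto\operatorname{inv}_k(x\cup y))$, i.e.\ Tate local duality, which is an isomorphism. Hence $\mu_k$ is an isomorphism on $E_2=E_\infty$, and since both spectral sequences are strongly convergent with finite-length filtrations and $\mu_k$ is compatible with them, $\mu_k$ is a $\pi_\ast$-isomorphism, hence a weak equivalence; the colimit over $n$ recovers the statement. (Alternatively one can argue through the Galois-descent identification $L_{K(1)}K(k)\simeq(KU^\wedge_p)^{hG_k}$, the Anderson self-duality of $KU^\wedge_p$, and Poincaré duality for the local Galois group $G_k$ — another guise of Tate local duality.)

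\emph{Main obstacle.} The real work is in the last step: showing that $\mu_k$ realizes the cup-product Tate pairing on all pages, compatibly with the descent filtrations. This requires the multiplicativity and naturality of the descent spectral sequence, the identification of its $E_2$-product with the étale cup product, the compatibility of $u_k$ with the localization/residue sequence, and — crucially — that $u_k$ restricts to $\operatorname{inv}_k$ itself and not merely to a $\bZ_p^\times$-multiple of it, so the ``Frobenius trace'' must be pinned down intrinsically. One must also control convergence of the $\bZ_p$-coefficient spectral sequence (continuous étale cohomology, possible $\lim^1$) and the small higher-stem contributions of $M_{\bZ/p^\infty}$; both are handled by working at finite level $\bZ/p^n$ and taking (co)limits.
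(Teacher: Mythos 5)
Your high-level strategy — realize the duality map via the $\ef$-multiplication, check it on the Thomason descent spectral sequence where the $E_2$-pairing is the étale cup product, and invoke Tate local duality plus strong convergence — is the standard approach and matches the spirit of what \cite{MR4121155} does. The comparison of dualized spectral sequences, including the reindexing $s\mapsto 2-s$, and the reduction to finite level $\bZ/p^n$ to control $\lim^1$, are both sound, and your list of ``main obstacles'' correctly identifies where the real work lies.

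However, your proposed construction of the canonical map $u_k$ has a genuine flaw. You suggest building $u_k$ from the boundary of the localization sequence $L_{K(1)}K(\kappa)\to L_{K(1)}K(\calO_k)\to L_{K(1)}K(k)$ composed with a ``Frobenius trace.'' Here $\kappa$ is the residue field, which has characteristic $p$, the same prime at which we are doing $K(1)$-localization. But $K(\bF_q)$ has $p$-torsion-free positive homotopy and $\pi_0\cong\bZ$, so its $p$-completion is $H\bZ_p$, which is $K(1)$-acyclic; hence $L_{K(1)}K(\kappa)\simeq \ast$. The boundary map is therefore nullhomotopic, and there is no ``Frobenius trace'' to speak of — the residue/localization construction that realizes $\operatorname{inv}_k$ away from the residue characteristic does not survive $K(1)$-localization at $p$. (Relatedly, there is no $p$-primary étale residue map from $H^2_\et(k;\mu_{p^\infty})$ to $\kappa$-cohomology, because $\mu_{p^\infty}$ is not unramified.) So the ``one model'' you exhibit does not exist, and the normalization ambiguity you flag — that $u_k$ must restrict to $\operatorname{inv}_k$ itself and not a $\bZ_p^\times$-multiple — is not actually resolved by your construction.

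The construction that \cite{MR4121155} uses, and that the present paper records after the theorem statement, avoids this. Thomason's spectral sequence yields the short exact sequence (\ref{thomasonssloc})
\[0 \to H^2_\et(k;\bZ/p^\infty(1)) \to \pi_0(L_{K(1)}K(k)\w M_{\bZ/p^\infty}) \to H^0_\et(k;\bZ/p^\infty)\to 0,\]
and the unit map $\bS\to K(k)$ provides a canonical splitting of the quotient; the resulting canonical retraction onto $H^2_\et(k;\bZ/p^\infty(1))$, composed with the Hasse invariant $H^2_\et(k;\bQ/\bZ(1))\cong\bQ/\bZ$, is $u_k$ on $\pi_0$, and the universal property of $I_{\bQ/\bZ}\bS$ promotes this to a map of spectra. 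This both pins down the exact normalization intrinsically and makes manifest that $u_k$ vanishes on the $H^0$ piece, which is what you need for the $E_2$-pairing to be the cup-product Tate pairing. Your parenthetical alternative via $(KU^\wedge_p)^{hG_k}$, Anderson self-duality of $KU^\wedge_p$, and Poincaré duality for $G_k$ is viable, but it would still require an independent identification of the duality class, so it does not sidestep the normalization issue either.
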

The latter map is induced by the short exact sequence
\begin{equation}\label{thomasonssloc}
 0 \to H^2_\et(k; \bZ/p^\infty(1)) \to  \pi_0( L_{K(1)}K(k)\w M_{\bZ/p^\infty})  \to H^0_\et(k; \bZ/p^\infty)\to 0
\end{equation}
arising from Thomason's spectral sequence, whose canonical splitting is given by the section $H^0_\et(k; \bZ/p^\infty)  \to \pi_0( L_{K(1)}K(k)\w M_{\bZ/p^\infty})$ coming from the unit map $\bS \to K(k)$.
The canonical retraction
\begin{equation}\label{localdualitymap}
 \pi_0( L_{K(1)}K(k)\w M_{\bZ/p^\infty})  \to H^2_\et(k; \bZ/p^\infty(1))  \to H^2_\et(k; \bQ/\bZ(1)) \cong \bQ/\bZ
\end{equation}
followed by the Hasse invariant isomorphism\cite[XIII\S3,p. 193]{MR0554237}, defines the desired map of spectra.

\subsection{Construction of the duality class map}
In order to construct the global duality map $u_{\calO_F}$, we use the Albert-Brauer-Hasse-Noether exact sequence instead of the Hasse invariant.
Let $F$ be a number field, $\calO_F$ be its ring of integers and let $r_1$ be the number of distinct real embeddings.
We denote $\calS$ by the set of prime ideals in $\calO_F$ dividing 2, and let $\calS_\infty$ be the union of $\calS$ and the set of Archimedean places of $F$.
The 2-torsion version of the Albert-Brauer-Hasse-Noether exact sequence for $\calO_F[\tfrac{1}{2}]$ then takes the form
\[0  \to H^2_\et(\calO_F[\tfrac{1}{2}];\bQ_2/\bZ_2(1))  \to  \bigoplus\limits_{\nu\in \calS_\infty}H^2_\et(F_\nu;\bQ_2/\bZ_2(1))  \overto{h} \bQ_2/\bZ_2 \to 0.\]
The direct summand $H^2_\et(F_\nu;\bQ_2/\bZ_2(1))$ is isomorphic to $\bQ_2/\bZ_2$ for finite primes (by the Hasse invariant isomorphism), $\bZ/2$ for the real places, and 0 for the complex places.
The latter map $h$ is given by the sum of the injections.

The Thomason descent spectral sequence (\ref{thomasonss}) associated with the ring $\calO_F[\tfrac{1}{2}]$ takes the form
\[E_2^{s,-t} = H^s_\et(\calO_F[\tfrac{1}{2}];\bQ_2/\bZ_2(\tfrac{t}{2})) \Rightarrow \pi_{t-s} L_{K(1)} K(\calO_F[\tfrac{1}{2}] \w M_{\bQ_2/\bZ_2})\]
and this collapses at the $E_4$-page \cite[2.10]{MR1803955}.
In particular, the map of the spectral sequence associated with the real embeddings $\calO_F[\tfrac{1}{2}]\to \bR$ induces a commutative diagram on the $E_3$-page
\[\xymatrix{
 H^2_\et(\calO_F[\tfrac{1}{2}];\bQ_2/\bZ_2(1)) \ar[r]  \ar[d]^-{d_3} & \bigoplus^{r_1} H^2_\et(\bR;\bQ_2/\bZ_2(1))  \ar[d]^-{d_3}_-\cong \\
 H^5_\et(\calO_F[\tfrac{1}{2}];\bQ_2/\bZ_2(2))  \ar[r]^-\cong & \bigoplus^{r_1} H^5_\et(\bR;\bQ_2/\bZ_2(2)) 
}\]
where the horizontal maps are maps of the spectral sequence on $E_3$-page and the vertical maps are the differential $d_3\colon E^{2,-2}_3 \to E^{5,-4}_3$.
The lower horizontal map is the isomorphism in \autoref{tate} and the right vertical isomorphism follows from the descent spectral sequence of the real numbers $\bR$, which is the homotopy fixed point spectral sequence associated with $KU^{hC_2}$ with complex conjugate action.
This identifies $E_4^{2,-2}$ with the summand of the 2-primary Brauer group corresponding to the finite primes above 2
\[H^2_\et(\calO_F[\tfrac{1}{2}];\bQ_2/\bZ_2(1))  \cong (\bQ_2/\bZ_2)^{s-1}\] where $s$ is the  number of places lying above 2.
Hence, we obtain a map of short exact sequences
\begin{equation}\label{ses1}
 \xymatrix@R=1em@C=1.3em{
0 \ar[r] & E_4^{2,-2} \ar[r]  \ar[d] &  \prod\limits_{\nu\in \calS}H^2_\et(F_\nu;\bQ_2/\bZ_2(1))   \ar[d] \ar[r] & \bQ_2/\bZ_2 \ar@{=}[d] \ar[r] & 0\\
0 \ar[r] & H^2_\et(\calO_F[\tfrac{1}{2}];\bQ_2/\bZ_2(1)) \ar[r] &  \prod\limits_{\nu\in \calS_\infty}H^2_\et(F_\nu;\bQ_2/\bZ_2(1))  \ar[r] & \bQ_2/\bZ_2 \ar[r] & 0
}\end{equation}
where the left and the middle vertical maps are natural inclusions.
The map of the descent spectral sequence associated to the non-Archimedean completion map gives the diagram
\[\xymatrix@R=1.3em{
0 \ar[d] & 0 \ar[d]  \\
E_4^{2,-2} \ar[d] \ar[r] & \prod\limits_{\nu\in \calS}H^2_\et(F_\nu;\bQ_2/\bZ_2(1))  \ar[d] \\
\pi_{0} (L_{K(1)} K(\calO_F[\tfrac{1}{2}])\w M_{\bQ_2/\bZ_2}) \ar[d]  \ar[r] & \prod\limits_{\nu\in \calS} \pi_{0} (L_{K(1)} K(F_\nu)\w M_{\bQ_2/\bZ_2}) \ar[d] \\
H^0_\et(\calO_F[\tfrac{1}{2}];\bQ_2/\bZ_2) \ar[r]  \ar[d] &  \prod\limits_{\nu\in \calS} H^0_\et(F_\nu;\bQ_2/\bZ_2) \ar[d]\\
0 & 0
}\]
and the upper horizontal map is the inclusion map that appears in the upper row of the diagram (\ref{ses1}).
Hence, we get a natural map from $\bQ_2/\bZ_2$ to the cokernel
\[C= \coker\left(\pi_{0} (L_{K(1)} K(\calO_F[\tfrac{1}{2}]) \w M_{\bQ_2/\bZ_2}) \to \prod\limits_{\nu\in \calS} \pi_{0} (L_{K(1)} K(F_\nu) \w M_{\bQ_2/\bZ_2}) \right).\]
This map, combined with the natural map $C \to \pi_{-1} \Fib(\ta\w M_{\bQ_2/\bZ_2})$ induced from the long exact sequence of homotopy group gives a map 
\[\bQ_2/\bZ_2 \to \pi_{-1} (\Fib\ta \w M_{\bQ_2/\bZ_2}).\]
By \cite[1.5]{MR4121155}, the K-theory transfer associated to the inclusion of number fields extends to a map on $\Fib(\ka)$ and the inclusion $\bQ \subset F$ induces a canonical retraction
\[\xymatrix{
\bQ_2/\bZ_2 \ar[r] \ar@{=}[d] & \pi_{-1} (\Fib\ta_F\w M_{\bQ_2/\bZ_2}) \ar[d] \\
\bQ_2/\bZ_2 \ar[r]^-\cong & \pi_{-1} (\Fib\ta_\bQ\w M_{\bQ_2/\bZ_2}).
}\]
The composite of the retraction $ \pi_{-1} (\Fib\ta_F\w M_{\bQ_2/\bZ_2}) \to \bQ_2/\bZ_2$ with the inclusion of 2-torsions $\bQ_2/\bZ_2 \to \bQ/\bZ$ defines the desired map 
\begin{equation}\label{dualityclassadj}
 \Fib(\ka) \w M_{\bZ/2^\infty} \to \SI^{-1}I_{\bQ/\bZ}\bS
\end{equation}
or, equivalently 
\begin{equation}\label{dualityclass}
u_{\calO_F}\colon \Fib(\ka) \to \SI^{-1} I_{\bZ_2}\bS.
\end{equation}

\section{Completed small \'etale site of Zink}\label{s2}
In this section, we review Zink's construction of the completed \'etale site of the ring of integers of a number field, described in \cite{zink}.
For a more modern with detailed treatment, see \cite{conrad}.
We then use this construction to give a hypercohomological description of the homotopy fiber $\Fib(\ka)$ in \autoref{mainthm}.

\subsection{Sheaves of abelian groups on Zink's completed \'etale site}
Fix a number field $F$ and denote $Y = \spec \calO_F$.
Let $\calS$ be the set of primes lying over 2, let $U$ be the open subscheme $Y \setminus \calS = \spec \calO_F[\tfrac{1}{2}]$, and let $Z$ be the reduced closed subscheme $Y \setminus U = \coprod_{\nu|2} \spec (\calO_F / \nu)$.
The natural inclusions $j\colon U \to Y$ and $i\colon Z \to Y$ induce adjoint functors between the categories of \'etale sheaves of abelian groups:
\begin{equation}\label{abelianrecollement}
 \xymatrix@C=3em{
\Sh(U_\et;\Ab) \ar@<1.5ex>[r]^-{j_!} \ar@<-1.5ex>[r]_-{j_\ast} & \Sh(Y_\et;\Ab)  \ar[l]|-{j^\ast}  \ar@<1.5ex>[r]^-{i^\ast} \ar@<-1.5ex>[r]_-{i^!}  & \Sh(Z_\et;\Ab)  \ar[l]|-{ \ i_\ast}
}
\end{equation}
where each functor is left adjoint to the functor below it, and these form a recollement.
One consequence of the recollement is that the category of sheaves $\Sh(Y_\et;\Ab)$ is completely determined by its restrictions on $U_\et$ and $Z_\et$ with gluing data.
\begin{theorem}\cite[III.2.5]{artin}\label{recolthm}
The category $\Sh(Y_\et;\Ab)$ is equivalent to the category of triples $(M',M'',\phz)$ where $M' \in \Sh(U_\et;\Ab)$, $M'' \in \Sh(Z_\et;\Ab)$ and $\phz\colon M'' \to i^\ast j_\ast M'$.
The equivalence is given by a functor
\[M \mapsto (j^\ast M, i^\ast M, \phz_M\colon i^\ast M \to  i^\ast j_\ast j^\ast M)\]
where the gluing morphism $\phz_M$ is induced from the unit map $\epz_j\colon M \to j_\ast j^\ast M$ associated to the adjunction $j^\ast \dashv j_\ast$.
The inverse is given by the following pullback diagram:
\[\xymatrix{
M \ar[r] \ar[d] \ar@{}[dr]|(0.2)\lrcorner & i_\ast M'' \ar[d]^-{i_\ast \phz} \\
j_\ast M' \ar[r]^-{\epz_i} & i_\ast i^\ast j_\ast M'.
}\]
\end{theorem}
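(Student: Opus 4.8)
The plan is to exhibit the comparison functor $\Phi\colon M \mapsto (j^\ast M,\, i^\ast M,\, \phz_M)$ of the statement, with $\phz_M$ obtained by applying $i^\ast$ to the unit $\epz_j\colon M \to j_\ast j^\ast M$, together with the candidate inverse $\Psi$ carrying a triple $(M',M'',\phz)$ to the fiber product $j_\ast M' \times_{i_\ast i^\ast j_\ast M'} i_\ast M''$ built from the unit $\epz_i$ and from $i_\ast\phz$; this fiber product exists since $\Sh(Y_\et;\Ab)$ is complete, and $\Psi$ is functorial in the evident way. It then suffices to produce natural isomorphisms $\Psi\Phi \iso \Id$ and $\Phi\Psi \iso \Id$, since a functor admitting a quasi-inverse on both sides is an equivalence.

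First I would collect the formal input coming from the fact that $j$ is an open immersion and $i$ the complementary closed immersion. From the recollement (\ref{abelianrecollement}) one gets the identities $j^\ast j_! \iso \Id \iso j^\ast j_\ast$ and $i^\ast i_\ast \iso \Id \iso i^! i_\ast$, the vanishings $i^\ast j_! = 0$ and $j^\ast i_\ast = 0$, and the exact sequences $0 \to j_! j^\ast M \to M \to i_\ast i^\ast M \to 0$ and $0 \to i_\ast i^! M \to M \to j_\ast j^\ast M$; each is verified on stalks at geometric points of $Y$, using that such a point factors through exactly one of $U$ and $Z$. I would also record that $j^\ast$ and $i^\ast$ are exact and that the pair $(j^\ast, i^\ast)$ is jointly conservative on $\Sh(Y_\et;\Ab)$.

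To prove $\Psi\Phi \iso \Id$, observe that $\Psi\Phi(M)$ is the fiber product $P = j_\ast j^\ast M \times_{i_\ast i^\ast j_\ast j^\ast M} i_\ast i^\ast M$, and that there is a canonical map $M \to P$ induced by the units $\epz_j$ and $\epz_i$. Applying the exact functor $j^\ast$ and using $j^\ast i_\ast = 0$ together with $j^\ast j_\ast \iso \Id$ turns $M \to P$ into the identity of $j^\ast M$; applying $i^\ast$ and using $i^\ast i_\ast \iso \Id$ turns it into the identity of $i^\ast M$, the fiber product collapsing because one of its two legs becomes an isomorphism. Since $(j^\ast, i^\ast)$ is jointly conservative, $M \to P$ is an isomorphism, and naturality is immediate from naturality of the units.

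For $\Phi\Psi \iso \Id$ I would start from a triple $(M',M'',\phz)$, set $M = \Psi(M',M'',\phz)$, and apply $j^\ast$ and $i^\ast$ to the defining pullback square. Exactness of $j^\ast$ with $j^\ast i_\ast = 0$ and $j^\ast j_\ast \iso \Id$ gives $j^\ast M \iso M'$; exactness of $i^\ast$ with $i^\ast i_\ast \iso \Id$ presents $i^\ast M$ as the fiber product of $i^\ast j_\ast M' \xleftarrow{\ =\ } i^\ast j_\ast M' \xleftarrow{\ \phz\ } M''$, so the projection $i^\ast M \to M''$ is an isomorphism, and tracing $\epz_j$ through these identifications shows that the gluing morphism of $\Phi\Psi(M',M'',\phz)$ is carried to $\phz$. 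I expect the genuine work to be exactly this last piece of bookkeeping — checking that the isomorphisms produced above are the specific unit and counit maps claimed, so that $\phz$ is tracked faithfully and the two natural isomorphisms are compatible enough to give an equivalence rather than merely a pair of one-sided inverses; by contrast the stalkwise verifications of the base-change identities in the second step are entirely standard for an open--closed decomposition.
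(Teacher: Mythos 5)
The paper does not prove this statement; it is quoted from Artin's lecture notes (the citation \cite[III.2.5]{artin}), so there is no in-text argument to compare against. Your proof is the standard one for the open--closed recollement of sheaves (the same argument appears, e.g., in Artin's notes and in the SGA 4 treatment): define the comparison functor and the fiber-product quasi-inverse, reduce to the base-change identities $j^\ast j_\ast \iso \Id$, $i^\ast i_\ast \iso \Id$, $j^\ast i_\ast = 0$, and joint conservativity of $(j^\ast, i^\ast)$, all of which are checked stalkwise, and then use exactness of $j^\ast$ and $i^\ast$ to pass through the defining pullback square. This is correct; the only thing you elide is the final bookkeeping that the recovered gluing morphism is literally $\phz$ (via the triangle identity making $i^\ast(\epz_i)$ an isomorphism), but you flag that honestly, and it does go through.
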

As a result, for a sheaf $M$ in $\Sh(Y_\et;\Ab)$, the unit map of $i^\ast \dashv i_\ast$ and the counit map of $j_! \dashv j^\ast$ fit into an exact sequence:
\begin{equation}\label{recolses}
0 \to j_! j^\ast M \to M \to i_\ast i^\ast M \to 0.
\end{equation}

In \cite{zink}, Zink introduced a Grothendieck site $\overline{Y}_\et$ by formally adjoining the real points to $Y_\et$, and presented the recollement structure as well as the Artin-Verdier duality pairing in a relevant form.
The underlying category of $\overline{Y}_\et$ has objects given by pairs $(V,T)$, where $V$ is an object in $Y_\et$ and $T\subset V_\infty = \Hom(\spec \bR, V)$ is a subset of the real points of $V$.
A morphism $(V,T) \to (V',T')$ is a morphism $V \to V'$ in $Y_\et$ that maps $T$ into $T'$.
A family $\{(V_\al,T_\al) \to (V,T)\}_\al$ is defined to be a covering if $\{V_\al \to V\}_\al$ is a covering in $Y_\et$ and $\{ T_\al \to T\}_\al$ is a covering in the category of sets, equipped with the topology generated by jointly surjective families of morphisms.
This covering then defines a Grothendieck topology, and these data form a Grothendieck site.
In particular, the projection onto the first factor $(V,T) \mapsto V$, gives rise to a functor from $\overline{Y_\et}$ to $Y_\et$ on the underlying categories. 
This in turn induces a site morphism $j_Y\colon Y_\et \to \overline{Y_\et}$.
On the other hand, each real embedding $v_k$ of $F$ defines a functor from $\overline{Y}_\et$ to $\Fin$, the category of finite sets, that assigns $(V,S) \mapsto \{s \in S: s \mapsto v_k\}$.
By considering $Y_\infty = \{v_1,v_2,\ldots,v_{r_1}\}$ as the $r_1$ copies of $\Fin$, the above construction induces a site morphism $i_Y\colon Y_\infty \to \overline{Y_\et}$.
These two site morphisms yield a diagram
\begin{equation}\label{modifiedrecollement}
 \xymatrix{
Y_\et \ar[r]^-{j_Y}& \overline{Y}_\et & Y_\infty \ar[l]_-{i_Y}
}\end{equation}
and they can be regarded as embeddings of an open subset and its closed complement, because they give rise to a recollement on the categories of sheaves \cite[1.3.2]{zink}:
\begin{equation}\label{zinkrecol}
 \xymatrix@C=4em{
\Sh(Y_\et;\Ab) \ar@<1.5ex>[r]^-{ j_{Y !} } \ar@<-1.5ex>[r]_-{ j_{Y \ast}} & \Sh(\overline{Y}_\et;\Ab)  \ar[l]|-{j_Y^\ast}  \ar@<1.5ex>[r]^-{i_Y^\ast} \ar@<-1.5ex>[r]_-{i_Y^!}  & \Sh(Y_\infty;\Ab)  \ar[l]|-{  i_{Y \ast}}.
}
\end{equation}
Therefore, the sheaf topos $\Sh(\overline{Y_\et};\Ab)$ is equivalent to the category of triples as in \autoref{recolthm} and the site $\overline{Y}_\et$ may be considered as a compactification of $Y_\et$ along the real embeddings $Y_\infty$.

A point on a sheaf topos $\Sh(Y_\et;\Ab)$ determines a point on $\Sh(\overline{Y_\et};\Ab)$ using the site morphism $j_Y$, and we call this an ordinary point.
Therefore, a stalk of a sheaf $\overline{M}$ over an ordinary point is simply a stalk of a sheaf $j_{Y}^\ast\overline{M}$ in $Y_\et$.
On the other hand, a site morphism $i_{v_k}\colon \Fin \to \overline{Y_\et}$ associated to the real embedding $v_k$ gives rise to a point $(i_{v_k}^\ast, i_{v_k \ast})$ on the topos $\Sh(\overline{Y}_\et;\Ab)$.
Hence, we refer to $v_k$ as a real point.
Given a sheaf $\overline{M}$ on $\overline{Y_\et}$, a stalk over a real point $v_k$ is computed by a colimit
\[  i_{v_k}^\ast \overline{M} = \colim_\al \overline{M}(R_\al) \cong \overline{M}( \colim_\al R_\al)= \overline{M}( \bR_{alg}) \]
where $\al$ ranges over all rings of integers $R_\al$ with $v_k(\calO_F) \subset R_\al \subset \bR$ and $\bR_{alg} = \bR \cap \overline{F}$ is the real algebraic numbers for some fixed separable closure $\overline{F}$.
By letting the inertia group $I_{v_k}$ be the cyclic group $C_2$ of order 2, we may write this stalk as the $I_{v_k}$-invariants of the stalk $\overline{M}_{\eta}$ over a generic point $\eta \in U$ so that the real points are treated as geometric points; see \cite[2.3.2]{conrad}.
Note that the union of the ordinary points and the real points forms a sufficient set of points for $\Sh(\overline{Y_\et};\Ab)$.

As in \autoref{recolthm}, a triple 
\[\left( M, M_\infty, \phz_{M}\colon M_\infty \to  i_Y^\ast j_{Y \ast} M \right)\]
 determines a sheaf in $\overline{Y_\et}$ where $M\in \Sh(Y_\et;\Ab)$ and $M_\infty \in \Sh(Y_\infty;\Ab)$.
Here, the sheaf $M_\infty$ can be written as a direct sum of abelian groups $\oplus_{k=1}^{r_1} M_{v_k}$ over each real point $v_k$.
Using the observation of the previous paragraph, the sheaf $i_Y^\ast j_{Y \ast} M$ can be identified with the direct sum
\begin{equation}\label{fixedpointidentification}
i_Y^\ast j_{Y \ast} M \cong \bigoplus_{k=1}^{r_1} (M_{\eta})^{I_{v_k}}
\end{equation}
and a sheaf on $\overline{Y}_\et$ is uniquely determined by the pullback square
\[\xymatrix{ 
\overline{M} \ar[r] \ar[d] \ar@{}[dr]|(0.2)\lrcorner& i_{Y \ast} \bigoplus_{k=1}^{r_1} M_{v_k} \ar[d]^-{i_{Y \ast} \op \phz_k} \\
j_{Y \ast} M \ar[r] & i_{Y \ast} \bigoplus_{k=1}^{r_1} (M_\eta)^{I_{v_k}}.
}\]

\subsection{Modified \'etale sheaf and its derived functor}
Under the identification of $\Sh(\overline{Y_\et};\Ab)$ with the category of triples, two functors $j_{Y \ast}$ and $j_{Y !}$ from $\Sh(Y_\et;\Ab)$ to $\Sh(\overline{Y_\et};\Ab)$ are described as follows
\begin{align*}
j_{Y \ast}M =& \left(M, \oplus_{k=1}^{r_1} (M_\eta)^{I_{v_k}} , \id \right) \\
j_{Y !}M =& (M ,0 ,0)
\end{align*}
In \cite{zink}, Zink defined the modification functor $(\widehat{-}) \colon \Sh(Y_\et;\Ab) \to \Sh(\overline{Y_\et};\Ab)$ by assigning identity functor on $Y_\et$ and $I_{v_k}$-coinvariants on each real point $v_k$.
\begin{definition}\cite[3.1.1]{zink}
 Given a sheaf $M$ on $Y_\et$, its {\it modified \'etale sheaf} $\widehat{M}$ on $\overline{Y}_\et$ is defined  by the triple
\begin{equation}\label{modifiedsheaf}
 M \mapsto \widehat{M}:= \left( M, \bigoplus_{k=1}^{r_1} (M_{\eta})_{I_{v_k}} ,  \bigoplus_{k=1}^{r_1}(M_{\eta} )_{I_{v_k}} \to \bigoplus_{k=1}^{r_1}(M_{\eta} )^{I_{v_k}} \right),
\end{equation}
where the gluing morphism is the direct sum of the norm map $Nm\colon (M_{\eta})_{I_{v_k}} \to  (M_{\eta} )^{I_{v_k}}$ for each real point $v_k$.
\end{definition}
The restriction $j_Y^\ast \widehat{M}$ onto $Y_\et$ is naturally isomorphic to $M$, and the unit map associated to the adjunction $j_{Y \ast} \dashv j_Y^\ast$ gives a natural map
\[\widehat{M} \to j_{Y \ast} j_Y^\ast \widehat{M} \overto\cong j_{Y \ast} M.\]
Since the kernel and the cokernel of this unit map are supported at the real points, we get a natural isomorphism
\[ H^s(\overline{Y}_\et; \widehat{M}) \cong H^s(\overline{Y}_\et; j_{Y \ast} M) \]
for $s \ge 1$.

Next, we define its derived functor.
The presence of the coinvariant functor in (\ref{modifiedsheaf}), which is not left exact, implies that the functor $\GA(\overline{Y}_\et; (\widehat{-}))$ fails to be left exact as well. 
Therefore, its derived functor extends to negative degrees to account for this non-exactness.
Consider a unit map $\epz_i$ associated to the (derived) adjunction $i_Y^\ast \dashv i_{Y \ast}$ 
\[R j_{Y \ast} (M) \overto{\epz_i} i_{Y \ast} i_Y^\ast R j_{Y \ast} (M)\] 
where $R j_{Y \ast}$ is the right derived functor.
The shifted mapping cone of $\epz_i$ exhibits $j_{Y !}M$ in the derived category.
The chain complex  $i_Y^\ast R j_{Y \ast} (M)$ is quasi-isomorphic to the Godement resolution of $i_Y^\ast j_{Y \ast} M$ and this is the direct sum of standard complex $C^\bullet(M_{\eta})$ which computes Galois cohomology of $M_{\eta}$ as $I_{v_k}$-module; see \cite[III.1.20]{MR0559531}.
The derived modification functor is defined as a shifted mapping cone of the unit morphism $\epz_i$ followed by the canonical map
\[\mathrm{can}\colon C^\bullet(M_{\eta}) \to \widehat{C}^\bullet(M_{\eta})\]
where $\widehat{C}^\bullet(M_{\eta})$ is the complete standard complex which computes Tate cohomology.
\begin{definition}\label{derivedmodifiedsheaf}
 Given an abelian sheaf $M$ on $Y_\et$, the {\it derived modified complex of sheaves} $\widehat{j_Y}M$ on $\overline{Y}_\et$ is defined by the shifted mapping cone
 \[ \begin{split}
 \widehat{j_Y} M := \mathrm{cone}\left( R j_{Y \ast} (M) \overto{\epz_i} i_{Y \ast} i_Y^\ast R j_\ast M  \simeq i_{Y \ast} \bigoplus^{r_1} C^\bullet(M_{\eta})   \right.  \hspace{5em} \\ 
 \left. \overto{\op \mathrm{can}} i_{Y \ast} \bigoplus^{r_1} \widehat{C}^\bullet(M_{\eta}) \right) [-1]
\end{split}\]
\end{definition}
The sheaf cohomology of $\widehat{M}$ are then agrees with the hypercohomology of $\widehat{j_Y}M$ in positive degrees:
\[ \bH^p(\overline{Y}_\et; \widehat{j_Y}M) \cong 
\begin{cases}
 H^p(\overline{Y}_\et;\widehat{M})  & p \ge 0 \\
 \bigoplus_{k=1}^{r_1}H^{p-1}_T(I_{v_k};M_{\eta}) & p < 0
\end{cases}
\]
where $H^\ast_T$ denotes the Tate cohomology.

The construction of the completed \'etale site and the modified sheaf may descend to an arbitrary dense open subscheme of $Y$, especially for $U = \spec \calO_F[\tfrac{1}{2}]$. 
This gives rise to the following commutative diagram of Grothendieck sites:
\begin{equation}\label{diagram}
 \xymatrix{
U_\et \ar[r]^-j \ar[d]_-{j_U}& Y_\et \ar[d]^-{j_Y}& Z_\et  \ar[l]_-i \ar@{=}[d]^-{j_Z} \\
\overline{U}_\et \ar[r]^-{\overline{j}}& \overline{Y}_\et & \overline{Z}_\et \ar[l]_-{\overline{i}} \\
U_\infty \ar[u]^-{i_U} \ar@{=}[r]^-{j_\infty} & Y_\infty \ar[u]_-{i_Y}
}
\end{equation}
where both row and column exhibit recollement in the categories of sheaves.
Although the modification functor $M \mapsto \widehat{M}$ is neither left nor right adjoint, it is compatible with the usual functors involved in recollement as in the following lemma.
Analogous statements also hold for the derived functors.
\begin{lemma}\label{thelemma}
\begin{enumerate}[label=(\alph*)]
 \item The two composite functors $\widehat{j_\ast (-)}, \overline{j}_\ast (\widehat{-}) \colon \Sh(U_\et;\Ab) \to \Sh(\overline{Y}_\et;\Ab)$ are naturally isomorphic.
 \item The two composite functors $\widehat{j^\ast (-)}, \overline{j}^\ast (\widehat{-}) \colon \Sh(Y_\et;\Ab) \to \Sh(\overline{U}_\et;\Ab)$ are naturally isomorphic. 
 \item The two composite functors $\widehat{j_! (-)}, \overline{j}_! (\widehat{-}) \colon \Sh(U_\et;\Ab) \to \Sh(\overline{Y}_\et;\Ab)$ are naturally isomorphic. 
\end{enumerate}
\end{lemma}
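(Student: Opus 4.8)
The plan is to verify each of the three natural isomorphisms by reducing to the defining data of the modification functor: a sheaf on the completed site is a triple $(M', M_\infty, \phz)$ with $M' \in \Sh((-)_\et;\Ab)$, $M_\infty \in \Sh((-)_\infty;\Ab)$ and $\phz$ a gluing morphism, and the modification functor acts as the identity on the open part while replacing the closed data by $I_{v_k}$-coinvariants glued in via the norm map. So in each case I would compute the triple associated to both composite functors and exhibit a canonical isomorphism between them, compatibly with the gluing morphisms. The key structural input is \autoref{recolthm} (and its analogue for $\overline{Y}_\et$), the description of $j_{Y\ast}$, $j_{Y!}$ in terms of triples recorded just before \autoref{derivedmodifiedsheaf}, and the identification $i_Y^\ast j_{Y\ast} M \cong \bigoplus_k (M_\eta)^{I_{v_k}}$ of (\ref{fixedpointidentification}); the analogous identifications on $\overline{U}_\et$ and the compatibility of restriction to generic points with the maps $j$, $j^\ast$, $j_!$ between the two number-ring sites are what make the stalk computations match up.

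For part (b) I would start with a sheaf $M$ on $Y_\et$ given by the triple $(j^\ast M, i^\ast M, \phz_M)$, so $j^\ast M$ lives on $U_\et$. On one side, $\widehat{j^\ast M}$ is the triple on $\overline{U}_\et$ with open part $j^\ast M$, closed part $\bigoplus_{k}((j^\ast M)_\eta)_{I_{v_k}}$, and norm-map gluing. On the other side, $\widehat{M}$ is the triple on $\overline{Y}_\et$ with open part $M$ and closed part $\bigoplus_k (M_\eta)_{I_{v_k}}$, and then $\overline{j}^\ast$ restricts along the left-hand column of diagram (\ref{diagram}): its open part is $j^\ast M$ and its closed part is obtained by restricting $Y_\infty$-data to $U_\infty$ — but $U_\infty = Y_\infty$ via $j_\infty$ (an identification, as the diagram records), and the generic point of $U$ is the generic point of $Y$, so $(M_\eta)_{I_{v_k}} = ((j^\ast M)_\eta)_{I_{v_k}}$. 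The gluing morphisms agree because both are the direct sum of norm maps and $\overline{j}^\ast$ is exact, hence preserves the pullback-square presentation. Part (b) is the cleanest because no adjunction unit/counit comparison is needed.

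For parts (a) and (c) the comparison is less immediate: $\widehat{j_\ast(-)}$ and $\overline{j}_\ast(\widehat{-})$ both land in $\Sh(\overline{Y}_\et;\Ab)$ but are built from pushforwards in two different orders. For (a), given $N$ on $U_\et$ I would compute $\widehat{j_\ast N}$ as the triple $\big(j_\ast N,\ \bigoplus_k ((j_\ast N)_\eta)_{I_{v_k}},\ \mathrm{Nm}\big)$ and compare with $\overline{j}_\ast(\widehat N)$, where $\widehat N = (N, \bigoplus_k(N_{\eta'})_{I_{v_k}}, \mathrm{Nm})$ on $\overline{U}_\et$ ($\eta'$ the generic point of $U$). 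The point is that $(j_\ast N)_\eta = N_{\eta'}$ since the generic point lies in $U$ and $j$ is an open immersion, so the closed parts match; for the open parts one uses that $\overline{j}_\ast$ restricted to the open stratum computes $j_\ast$ by the compatibility of the two recollements in (\ref{diagram}); and for the gluing morphism one checks that $\overline{j}_\ast$ carries the norm-map gluing of $\widehat N$ to the norm-map gluing of $\widehat{j_\ast N}$, using that $i_Y^\ast j_{Y\ast}(j_\ast N) \cong i_U^\ast j_{U\ast} N$ under the identifications above. Part (c) is dual and slightly easier, since $j_!$ and $\overline{j}_!$ just pad with zero on the respective closed strata: $\widehat{j_! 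N}$ has open part $j_! N$ and closed part $\bigoplus_k ((j_!N)_\eta)_{I_{v_k}} = \bigoplus_k (N_{\eta'})_{I_{v_k}}$, while $\overline{j}_!(\widehat N)$ has open part $j_!N$ and closed part inherited from $\widehat N$, again $\bigoplus_k(N_{\eta'})_{I_{v_k}}$, with both gluings the norm map. The main obstacle I anticipate is bookkeeping in part (a): one must check that the chosen isomorphisms are \emph{natural} and that the gluing morphism is transported correctly, which requires tracking the unit map $\epz_j$ of $j^\ast \dashv j_\ast$ through both the inner modification and the outer pushforward and confirming these two unit maps are compatible under the site maps of (\ref{diagram}) — a diagram chase rather than a computation, but the one place where a sign-or-direction error could creep in. Finally, the parenthetical claim that the analogous statements hold for derived functors follows by replacing each sheaf by an injective (or Godement) resolution and observing that the modification construction of \autoref{derivedmodifiedsheaf} is defined levelwise by the same formulas, so the object-level isomorphisms assemble into isomorphisms of complexes.
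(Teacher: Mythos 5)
Your proposal is correct but organized differently from the paper. The paper proves (a) first by computing the \emph{pullback} $\overline{j}^\ast\widehat{j_\ast M}$ in terms of triples --- this is the easy direction since $\overline{j}^\ast$ is a restriction and amounts to checking stalks at ordinary and real points --- shows it equals $\widehat{M}$, and then observes that the adjoint map $\widehat{j_\ast M}\to\overline{j}_\ast\widehat{M}$ is a stalkwise isomorphism. Parts (b) and (c) are then \emph{derived} from (a) and (b) respectively: in each case one considers the map adjoint to a composite of an adjunction unit with the previously-established equivalence, and checks it is a stalkwise isomorphism. You instead propose to compute the triples of $\widehat{j_\ast(-)}$, $\overline{j}_\ast(\widehat{-})$, etc.\ directly for all three parts, treating them as independent verifications; you even flag (b) as the cleanest under your scheme, whereas in the paper (b) is the one that leans on (a).

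Both routes work, but they trade off differently. Your direct approach is symmetric and perhaps more transparent about \emph{why} the isomorphisms hold, but it requires you to describe $\overline{j}_\ast$ and $\overline{j}_!$ explicitly as functors on triples --- in particular to establish the base-change identification $i_Y^\ast\overline{j}_\ast \cong (j_\infty)_\ast i_U^\ast$ across the two recollements in (\ref{diagram}) so that the closed part of $\overline{j}_\ast\widehat N$ is what you claim. That is true (the real points already lie in the image of $\overline{j}$, so pushforward along the open immersion does nothing over them), but it is exactly the piece of bookkeeping the paper's adjunction-and-stalks argument avoids. Your anticipated ``main obstacle'' in (a) --- tracking the gluing morphism through both constructions --- is real and is the reason the paper reverses direction to go through $\overline{j}^\ast$. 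Finally, your parenthetical claim that the derived statements follow ``levelwise'' from a Godement resolution is slightly too quick, since the derived modified complex of Definition~\ref{derivedmodifiedsheaf} is a shifted mapping cone, not a levelwise modification of a resolution; but the paper also asserts the derived analogues without proof, so you are not behind it there.
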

\begin{proof}
(a) Fix a sheaf $M$ on $U_\et$.
The sheaf $\overline{j}^\ast \widehat{j_\ast M}$ on $\overline{U_\et}$ is characterized by the triple
\[ \left( j_U^\ast \overline{j}^\ast \widehat{j_\ast M}, (i_U)^\ast \overline{j}^\ast \widehat{j_\ast M} , \phz \right).\]
Over $U_\et$, the sheaf $j_U^\ast \overline{j}^\ast \widehat{j_\ast M}$ is isomorphic to $M$ through a sequence of natural isomorphisms:
\[j_U^\ast \overline{j}^\ast \widehat{j_\ast M} \cong j^\ast j_Y^\ast \widehat{j_\ast M} \cong j^\ast j_\ast M \cong M\]
where the first isomorphism follows from the commutative diagram (\ref{diagram}) and the second from the definition of the modified \'etale sheaf (\ref{modifiedsheaf}).
Likewise, over each real point $v_k$, the stalk is naturally isomorphic to $( M_\eta )_{I_{v_k}}$:
\[i_{v_k}^\ast \overline{j}^\ast \widehat{j_\ast M} \cong (j_\infty)^\ast  i_{v_k}^\ast \widehat{j_\ast M}  \cong ( (j_\ast M)_\eta )_{I_{v_k}} \cong ( M_\eta )_{I_{v_k}}.\]
Because \(\overline{j}^\ast\) simply forgets the finite-prime data, the norm map \(\varphi_k\) at the real point \(v_k\) coincides with that of \(j_\ast M\), noting that \(M_\eta\) and \((j_\ast M)_\eta\) agree. 
Hence, we obtain a natural isomorphism
\[ \overline{j}^\ast \widehat{j_\ast M} \cong \widehat{M},\] 
and the adjoint map 
\[ \widehat{j_\ast M} \to \overline{j}_\ast \widehat{M}\]
is also an isomorphism, since it is an isomorphism on every stalk.

(b) Consider the map adjoint to
\[(\widehat{-}) \to \widehat{j_\ast j^\ast(-) } \simeq \overline{j}_\ast \widehat{j^\ast (-)}.\]
Here, the first map is the unit map $\id \to j_\ast j^\ast$, followed by the modification functor $(\widehat{-}) : \Sh(Y_\et;\Ab) \to \Sh(\overline{Y}_\et;\Ab)$. 
The latter equivalence is from part (a).
Since this map induces an isomorphism on each stalk, and it establishes the desired equivalence.

(c) To show natural isomorphism, consider the map adjoint to
 \[(\widehat{-}) \to \widehat{j^\ast j_!(-)} \simeq \overline{j}^\ast \widehat{ j_!(-)}.\]
In this case, the first map is the unit map $\id \to j^\ast j_!$, followed by the modification functor $(\widehat{-}) \colon \Sh(U_\et;\Ab) \to \Sh(\overline{U}_\et;\Ab)$, and the latter equivalence is from (b).
Again, since this map induces an isomorphism on each stalk, we obtain the desired equivalence.
\end{proof}

Let  $S_\infty$ be the disjoint union of the sites $Y_\infty$ and $Z_\et$.
Then the site $\overline{Y}_\et$ may be regarded as a scheme that adjoins finite prime and real points to $Y_\et$, so that the site morphisms
\[\xymatrix{
U_\et \ar[r]& \overline{Y}_\et & S_\infty \ar[l].
}\]
exhibit a recollement as well.
The Artin-Verdier-Zink duality \cite[3.2.1]{zink} for a sheaf $\bZ/2^n(t)$ on $U_\et$ then states a perfect pairing in this context:
\begin{equation}\label{avzduality}
\begin{split}
\bH^p(\overline{Y}_\et; \widehat{j_Y} j_! \bZ/2^n(t)) \ox H^{3-p}(U_\et; \bZ/2^n(1-t)) \qquad \\
 \to \bH^3(\overline{Y}_\et;\widehat{j_Y} j_!\bG_{m,U}) \cong \bQ/\bZ.
\end{split}
\end{equation}
The pairing is given by the Yoneda pairing using the identification
\begin{align}\label{identification}
\begin{split}
H^{3-p}(U_\et; \bZ/2^n(1-t))&\cong \Ext^{3-p}_{U_\et}(\bZ/2^n(t),\bG_{m,U}) \\
&\cong   \Ext^{3-p}_{Y_\et}(j_! \bZ/2^n(t),j_! \bG_{m,U}) \\ 
&\cong   \Ext^{3-p}_{Y_\et}( j_Y^\ast \widehat{j_Y} j_! \bZ/2^n(t),j_! \bG_{m,U})\\
&\cong   \Ext^{3-p}_{\overline{Y}_\et}(  \widehat{j_Y} j_! \bZ/2^n(t), Rj_{Y \ast} j_! \bG_{m,U})
\end{split}
\end{align}
and the sequence of isomorphisms
\begin{align}\label{seqofiso}
\begin{split}
 \bH^3(\overline{Y}_\et;  Rj_{Y \ast} j_! \bG_{m,U}) & \cong H^3(Y_\et; j_! \bG_{m,U}) \\
 & \cong H^3(\overline{Y}_\et; j_{Y !} j_! \bG_{m,U}) \\
 & \cong  H^3(Y_\et; \widehat{j_! \bG_{m,U}}) \\
 & \cong \bH^3(\overline{Y}_\et; \widehat{j_Y} j_! \bG_{m,U}) \cong  \bQ/\bZ.
 \end{split}
\end{align}
The second isomorphism in (\ref{seqofiso}) follows from the long exact sequence of hypercohomology associated to the short exact sequence (\ref{recolses}) by taking $M= R j_{Y \ast} j_! \bG_{m,U}$ with the fact that the real points do not have higher cohomology as they behave as geometric points in $\overline{Y}_\et$.
Similarly, the third isomorphism follows from the long exact sequence of sheaf cohomology associated to the short exact sequence
\[0 \to j_{Y !}  M \to \widehat{M} \to i_\ast \bigoplus (M_\eta)_{I_{v_k}} \to 0.\]

When $F$ is totally imaginary, two sites $Y_\et$ and $\overline{Y_\et}$ are naturally isomorphic.
Hence, the hypercohomology group $\bH^p(\overline{Y}_\et; \widehat{j_Y} j_! \bZ/2^n(t))$ becomes naturally isomorphic to $H^p(Y_\et;j_! \bZ/2^n(t))$, and (\ref{avzduality}) recovers the original Artin-Verdier duality.

\subsection{Hypersheaf of spectra on the completed \'etale site}
Analyzing the $K(1)$-localized K-theory hypersheaf $\calK$ on $U_\et$ along these functors plays a crucial observation of \cite[2.5]{MR4121155}.
The recollement of sheaves of abelian groups (\ref{abelianrecollement}) extends to the context of the stable $\infty$-category of hypersheaves of spectra:
\[ \xymatrix@C=3em{
\Hyp(U_\et;\Sp) \ar@<1.5ex>[r]^-{j_!} \ar@<-1.5ex>[r]_-{j_\ast} & \Hyp(Y_\et;\Sp)  \ar[l]|-{j^\ast}  \ar@<1.5ex>[r]^-{i^\ast} \ar@<-1.5ex>[r]_-{i^!}  & \Hyp(Z_\et;\Sp)  \ar[l]|-{\ i_\ast}.
}\]
Likewise, the recollement (\ref{zinkrecol}) associated with the completed \'etale site also extends to hypersheaves:
\[ \xymatrix@C=3em{
\Hyp(Y_\et;\Sp) \ar@<1.5ex>[r]^-{j_!} \ar@<-1.5ex>[r]_-{j_\ast} & \Hyp(\overline{Y}_\et;\Sp)  \ar[l]|-{j^\ast}  \ar@<1.5ex>[r]^-{i^\ast} \ar@<-1.5ex>[r]_-{i^!}  & \Hyp(Y_\infty;\Sp)  \ar[l]|-{\ i_\ast}.
}\]
As we did with the sheaves of abelian groups (\ref{fixedpointidentification}), the hypersheaf $i^\ast j_\ast \calE$ may be naturally identified with a collection of homotopy fixed points under the $I_{v_k}$-action on each stalk $\calE_{\eta}$:
\[i^\ast j_\ast \calE \simeq \bigvee_{k=1}^{r_1} (\calE_{\eta})^{hI_{v_k}}.\]
In light of the \autoref{derivedmodifiedsheaf} of the derived modified complex of sheaves, we give the definition of the modified hypersheaf of spectra.

\begin{definition}
Given a hypersheaf of spectra $\calE$ on the \'etale site $U_\et$, its associated {\it modified hypersheaf} $\widehat{j}\calE$ on $\overline{U}_\et$ is defined as a homotopy fiber:
\begin{equation}\label{eq2}
\widehat{j} \calE :=\Fib\left( j_\ast \calE \longrightarrow i_\ast i^\ast j_\ast \calE \simeq i_\ast \bigvee_{k=1}^{r_1} (\calE_{\eta})^{hI_{v_k}} \overto{\mathrm{can}} i_\ast \bigvee_{k=1}^{r_1} (\calE_{\eta})^{t I_{v_k}} \right) 
\end{equation}
where $(\calE_{\eta})^{tI_{v_k}}$ is the Tate spectrum, a homotopy cofiber of the hypernorm  $N_h: (\calE_{\eta})_{hI_{v_k}} \to (\calE_{\eta})^{hI_{v_k}}$ associated with $I_{v_k}$-action on the spectrum $\calE_{\eta}$; see \cite[Ch 3]{Jardine}.
Equivalently, this can be defined by the homotopy pullback:
\[\xymatrix{
\widehat{j}\calE \ar[r] \ar[d] \ar@{}[dr]|(0.2)\lrcorner& i_\ast  \bigvee_{k=1}^{r_1} (\calE_{\eta})_{hI_{v_k}} \ar[d]^-{N_h} \\
j_\ast \calE \ar[r] & i_\ast i^\ast j_\ast \calE.
}\]
\end{definition}

\begin{note}\label{note}
When $\calE$ is a hypersheaf of ring spectra, both $j_\ast \calE$ and $(\calE_{\eta})^{tI_{v_k}}$ inherit ring structures, and the maps in (\ref{eq2}) are ring maps.
Therefore, the modified hypersheaf $\widehat{j}\calE$ inherits a $j_\ast \calE$-module structure.
In general, if $\calF$ is an $\calE$-module, then $\widehat{j}\calF$ inherits a $j_\ast \calE$-module structure.
\end{note}

In the case of the $K(1)$-local algebraic K-theory sheaf $\calK$, its stalk $\calK_{\eta}$ is the 2-completed periodic complex K-theory spectrum $KU_2^\w$ whose Tate construction is known to vanish \cite[6.1]{MR3349325}.
Hence, we get a 2-equivalence:
\begin{equation}\label{eq1}
\widehat{j}\calK \simeq j_\ast \calK.
\end{equation}
In \cite[2.5]{MR4121155}, Blumberg-Mandell found a hypercohomological description of $\Fib(\ka)$ as a global section $j_! \calK_{U_\et}(Y_\et)$, up to $p$-completion.
Therefore, in the context of the completed \'etale site, we get $j_{Y \ast} j_! \calK_{U_\et}(\overline{Y}_\et)$ as a model for $\Fib(\ka)$.
Here we state another model for the purpose of our pairing.
\begin{proposition}\label{fibashypshf}
The homotopy fiber $\Fib(\ka)$ of $K(1)$-local K-theoretic completion map 
\begin{equation}\label{hypcohdescrip}
 \ka\colon L_{K(1)}K(\calO_F[\tfrac{1}{2}]) \to \prod_{\nu\in \calS} L_{K(1)}K(F_\nu)
\end{equation}
 is equivalent to the global section $\widehat{j_Y} j_! \calK_{U_\et} (\overline{Y}_\et)$ of the hypersheaf of spectra $\widehat{j_Y} j_! \calK_{U_\et}$ after 2-completion.
\end{proposition}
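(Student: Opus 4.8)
I would prove the proposition by comparing the two hypersheaves $\widehat{j_Y}(j_!\calK_{U_\et})$ and $j_{Y\ast}(j_!\calK_{U_\et})$ on $\overline{Y}_\et$; write $\calK = \calK_{U_\et}$ for brevity. The plan rests on the fact, explained just before the statement, that $j_{Y\ast}(j_!\calK)(\overline{Y}_\et)$ already models $\Fib(\ka)$ after $2$-completion: by \cite[2.5]{MR4121155} — whose argument uses only the finite-prime localization recollement $U_\et \xrightarrow{j} Y_\et \xleftarrow{i} Z_\et$ and is therefore insensitive to the residue characteristic — the cofiber sequence $j_!j^\ast(j_\ast\calK) \to j_\ast\calK \to i_\ast i^\ast(j_\ast\calK)$ on $Y_\et$, together with $(j_\ast\calK)(Y_\et) \simeq L_{K(1)}K(\calO_F[\tfrac12])$ and the identification $(i^\ast j_\ast\calK)(Z_\et) \simeq \prod_{\nu\in\calS}L_{K(1)}K(F_\nu)$ (étale hyperdescent and rigidity for $K(1)$-local K-theory), exhibits $(j_!\calK)(Y_\et) \simeq \Fib(\ka)$ after $2$-completion; and since $j_Y^\ast$ is left exact it preserves the terminal hypersheaf, so $j_{Y\ast}$ preserves global sections, giving $(j_{Y\ast}j_!\calK)(\overline{Y}_\et) \simeq (j_!\calK)(Y_\et) \simeq \Fib(\ka)$ after $2$-completion. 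It thus suffices to show that the natural map $\widehat{j_Y}(j_!\calK) \to j_{Y\ast}(j_!\calK)$ is a $2$-equivalence of hypersheaves on $\overline{Y}_\et$.

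For the comparison I would unwind the defining fiber sequence of the modified hypersheaf,
\[\widehat{j_Y}(j_!\calK) = \Fib\Bigl( j_{Y\ast}(j_!\calK) \longrightarrow i_{Y\ast}i_Y^\ast j_{Y\ast}(j_!\calK) \simeq i_{Y\ast}\bigvee_{k=1}^{r_1}\bigl((j_!\calK)_\eta\bigr)^{hI_{v_k}} \overto{\mathrm{can}} i_{Y\ast}\bigvee_{k=1}^{r_1}\bigl((j_!\calK)_\eta\bigr)^{tI_{v_k}}\Bigr),\]
so the cofiber of $\widehat{j_Y}(j_!\calK) \to j_{Y\ast}(j_!\calK)$ is $i_{Y\ast}\bigvee_{k=1}^{r_1}\bigl((j_!\calK)_\eta\bigr)^{tI_{v_k}}$. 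Because the geometric generic point $\eta$ lies in the open $U$, on which $j_!$ restricts to the identity, the stalk $(j_!\calK)_\eta$ agrees with $\calK_\eta \simeq KU_2^\w$ with $I_{v_k}\cong C_2$ acting by complex conjugation, and its Tate construction $(KU_2^\w)^{tC_2}$ vanishes by \cite[6.1]{MR3349325}. Hence each Tate spectrum, the wedge, and its pushforward along $i_{Y\ast}$ are $2$-adically trivial, so $\widehat{j_Y}(j_!\calK) \to j_{Y\ast}(j_!\calK)$ is a $2$-equivalence — exactly the analogue of (\ref{eq1}) with $j_!\calK$ in place of $\calK$. Taking global sections over $\overline{Y}_\et$ and combining with the first paragraph gives $(\widehat{j_Y}j_!\calK)(\overline{Y}_\et) \simeq \Fib(\ka)$ after $2$-completion.

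The step that genuinely carries weight — though it is imported wholesale from \cite{MR4121155} — is the identification $(i^\ast j_\ast\calK)(Z_\et) \simeq \prod_{\nu\in\calS}L_{K(1)}K(F_\nu)$, i.e.\ that the $K(1)$-local K-theory of a strictly henselian local scheme sees only its completed local field; everything else is formal manipulation of the two recollements plus the single homotopy-theoretic input $(KU_2^\w)^{tC_2}\simeq\ast$. I would also check that no real-prime contributions enter the $Z_\et$-term — they cannot, since $Z = \coprod_{\nu\mid 2}\spec(\calO_F/\nu)$ consists of finite primes only — so that the target of $\ka$ here is genuinely $\prod_{\nu\in\calS}$ rather than $\prod_{\nu\in\calS_\infty}$, the latter being where the extra real summands $\bZ/2$ would have appeared.
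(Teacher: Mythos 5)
Your proof is correct and rests on exactly the two ingredients the paper uses: the Blumberg--Mandell identification $j_!\calK_{U_\et}(Y_\et)\simeq\Fib(\ka)$ after $2$-completion, and the $2$-adic vanishing of the Tate construction $(KU_2^\w)^{tC_2}$. The only organizational difference is that the paper establishes the hypersheaf equivalence $\widehat{j_Y}j_!\calK\simeq j_{Y\ast}j_!\calK$ through the chain $\widehat{j_Y}j_!\calK\simeq\overline{j}_!\widehat{j_U}\calK\simeq\overline{j}_!j_{U\ast}\calK\simeq j_{Y\ast}j_!\calK$, invoking the derived version of Lemma~\ref{thelemma} together with (\ref{eq1}), whereas you read the cofiber of $\widehat{j_Y}j_!\calK\to j_{Y\ast}j_!\calK$ directly off the defining fiber sequence as $i_{Y\ast}\bigvee_k\bigl(KU_2^\w\bigr)^{tI_{v_k}}$ and observe it is $2$-adically trivial; your version is a valid shortcut that sidesteps the ``derived version'' of the commutation lemma, which the paper asserts but does not spell out.
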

\begin{proof}
This can be proven by the sequence of equivalences
\[\widehat{j_Y} j_! \calK_{U_\et}\simeq \overline{j}_! \widehat{j_U} \calK_{U_\et} \simeq \overline{j}_! j_{U \ast} \calK_{U_\et} \simeq j_{Y \ast} j_! \calK_{U_\et}\]
where the first equivalence follows from  the derived version of the \autoref{thelemma}(a) and the second equivalence follows from (\ref{eq1}).
The last morphism is adjoint to the natural map
\[ j_{U \ast} \calK_{U_\et} \to j_{U \ast} j^\ast j_! \calK_{U_\et} \simeq \overline{j}^\ast j_{Y \ast} j_! \calK_{U_\et}\]
which is an equivalence since it is an equivalence in each stalk.
\end{proof}

Therefore, the spectrum $\Fib(\ka)$ has two 2-equivalent hypercohomological models as a global section of hypersheaves
\begin{equation}\label{fibka}
 \widehat{j_Y} j_! \calK_{U_\et}  \overto\simeq j_{Y \ast} j_!  \calK_{U_\et}
\end{equation}
and both admit natural $j_{Y \ast} j_\ast \calK_{U_\et}$-module structure, as discussed in \autoref{note}.
Under this identification, the pairing
\[\Fib(\ka) \w L_{K(1)}K(\calO_F[\tfrac{1}{2}]) \to \Fib(\ka)\]
coincides with the pairing
\begin{equation}\label{pairing}
 \overline{j}_\ast j_{U \ast} \calK \w \overline{j}_! \widehat{j_U} \calK \simeq \overline{j}_\ast j_{U \ast} \calK \w \overline{j}_! j_{U \ast} \calK \to \overline{j}_! j_{U \ast} \calK
\end{equation}
given by $ \overline{j}_\ast j_{U \ast} \calK$-module structure on $\overline{j}_! \widehat{j_U} \calK$ followed by the global section; see \cite[2.7]{MR4121155}.

\section{Construction of spectral sequences and pairing}\label{s3}
In this section, we construct spectral sequences for the spectra $L_{K(1)}K(\calO_F[\tfrac{1}{2}])^{/2^n}$ and $\Fib(\ka)^{/2^n}$.
Note that the site morphism $j\colon Y_\et \to \overline{Y}_\et$ in (\ref{modifiedrecollement}) transfers categorical properties that allow the construction of the Thomason-Jardine descent spectral sequence \cite[3.1-3]{MR1803955}.
After that, we provide a proof of the main result, \autoref{mainthm}.

\subsection{Construction of spectral sequences and pairing}
For the global section $\calK^{/2^n}_{U_\et} (U_\et)$, we use the same spectral sequence as in \cite[\S4]{MR4121155} arising from the Whitehead tower:
\[ \cdots \to W^{t+1} \calK^{/2^n}_{U_\et} (U_\et)  \to W^t \calK^{/2^n}_{U_\et} (U_\et) \to \cdots\]
where $W^\bullet$ is the truncation functor $\ta^{\bullet \ge}$.
We write $C^t \calK^{/2^n}_{U_\et}$ for the homotopy cofiber
\[C^t \calK^{/2^n}_{U_\et} := \mathrm{Cof}(W^{t+1} \calK^{/2^n}_{U_\et}  \to W^{t} \calK^{/2^n}_{U_\et} ).\]
The filtration then gives the descent spectral sequence with the $E_2$-page
\begin{equation}\label{ssK}
 E_2^{s,t} =\pi_{-s+t}  C^t \calK^{/2^n}_{U_\et} =H^{s-t}(U_\et; \bZ/2^n(\tfrac{t}{2}))
\end{equation}
which abuts to the colimit
\[ \colim \pi_{-s+t}  W^\bullet \calK^{/2^n}_{U_\et}  \cong \pi_{t-s}\calK_{U_\et}^{/2^n}(U_\et).\]
This spectral sequence collapses at the $E_4$-page with finitely many nonzero terms in each total degree \cite[2.10]{MR1803955}. Therefore this strongly converges.

There are two spectral sequences for $\Fib(\ka)^{/2^n}$, one for each model $j_{Y\ast} j_! \calK_{U_\et}^{/2^n}$ and $\widehat{j_Y} j_! \calK_{U_\et}^{/2^n}$; see (\ref{fibka}).
We first consider the Whitehead tower followed by the functor $j_!$:
\[ \cdots \to j_! W^{t+1} \calK^{/2^n}_{U_\et} ({Y}_\et)  \to  j_! W^t \calK^{/2^n}_{U_\et} ({Y}_\et) \to \cdots.\]
We abbreviate each level
\[ \calX^t := j_! W^t \calK^{/2^n}_{U_\et}  \simeq \Fib\left( j_\ast W^t \calK^{/2^n}_{U_\et} \to  i_\ast i^\ast  j_\ast  W^t \calK^{/2^n}_{U_\et}\right) \]
and write the homotopy cofiber as
\[C^t \calX := \mathrm{Cof}( \calX^{t+1} \to \calX^t).\]
Then the spectral sequence associated to this filtration has $E_2$-page:
\[  E^{s,t}_2 \left(\overline{Y}_\et; j_{Y \ast} j_!\calK^{/2^n}_{U_\et} \right)  = \pi_{-s+t} C^t \calX({Y}_\et) =  H^s (Y_\et; j_! \bZ/2^n(\tfrac{t}{2}))  \]
that approaches the colimit
\[ \colim \pi_{-s+t} \calX^\bullet ({Y}_\et) \cong \pi_{-s+t} \left( j_! \calK^{/2^n}_{U_\et} ({Y}_\et)\right).\]
The spectral sequence also collapses at the $E_4$-page as in (\ref{ssK}) and converges strongly as the $\holim \calX^\bullet$ is trivial and $H^s (Y_\et; j_! \bZ/2^n(\tfrac{t}{2}))$ is only non-zero in a finite range.

For the second model, consider a tower
\[ \cdots \to \widehat{j_Y} j_! W^{t+1} \calK^{/2^n}_{U_\et} (\overline{Y}_\et)  \to \widehat{j_Y} j_! W^t \calK^{/2^n}_{U_\et} (\overline{Y}_\et) \to \cdots.\]
We abbreviate
\[ \widehat{\calX}^t := \widehat{j_Y} j_! W^t \calK^{/2^n}_{U_\et}  \simeq \Fib\left( \widehat{j_Y} j_\ast W^t \calK^{/2^n}_{U_\et} \to  \widehat{j_Y} i_\ast i^\ast  j_\ast  W^t \calK^{/2^n}_{U_\et}\right) \]
and write
\[C^t \widehat{\calX} := \mathrm{Cof}( \widehat{\calX}^{t+1} \to \widehat{\calX}^t)\]
for the homotopy cofiber.
Then the $E_2$-page of the associated spectral sequence is given by the following
\begin{equation}\label{ss1}
 E^{s,t}_2\left(\overline{Y}_\et; \widehat{j_Y}j_!\calK^{/2^n}_{U_\et} \right) =  \pi_{-s+t} C^t \widehat{\calX} (\overline{Y}_\et) \cong \bH^s (\overline{Y}_\et;\widehat{j_Y} j_! \bZ/2^n(\tfrac{t}{2})).
\end{equation}
This spectral sequence approaches to the colimit
\[ \colim \pi_{-s+t} \widehat{\calX}^\bullet (\overline{Y}_\et) \cong \pi_{-s+t} \left( \widehat{j_Y} j_! \calK^{/2^n}_{U_\et} (\overline{Y}_\et)\right)\]
and conditionally converges as $\holim \widehat{\calX}^\bullet \simeq \ast$.
\begin{proposition}
The spectral sequence (\ref{ss1}) collapses at the $E_4$-page and strongly converges.
\end{proposition}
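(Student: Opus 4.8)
The plan is to deduce the collapse and convergence of the spectral sequence (\ref{ss1}) from the corresponding properties already established for the other model $j_{Y\ast}j_!\calK^{/2^n}_{U_\et}$, by comparing the two $E_2$-pages degreewise. By (\ref{fibka}) the two hypersheaves $\widehat{j_Y}j_!\calK^{/2^n}_{U_\et}$ and $j_{Y\ast}j_!\calK^{/2^n}_{U_\et}$ are $2$-equivalent, and both towers $\widehat{\calX}^\bullet$ and $\calX^\bullet$ arise by applying the same Whitehead-tower filtration of $\calK^{/2^n}_{U_\et}$ followed by functors that agree up to the natural $2$-equivalence of (\ref{fibka}); hence the induced map of towers is a levelwise $2$-equivalence and induces an isomorphism of spectral sequences from some finite page on. So it suffices to understand one of them, and the cleanest is to work directly with (\ref{ss1}) via its $E_2$-term.

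First I would record the $E_2$-page explicitly: by (\ref{ss1}) and the computation of hypercohomology of the derived modified complex following \autoref{derivedmodifiedsheaf}, together with the identification $\bH^s(\overline{Y}_\et;\widehat{j_Y}j_!\bZ/2^n(t))\cong H^s(Y_\et;j_!\bZ/2^n(t))$ for $s\ge 1$ (the modified sheaf and $j_{Y\ast}$ of the underlying sheaf differ only at real points, which carry no higher cohomology), the groups $E_2^{s,t}$ coincide with $H^s(Y_\et;j_!\bZ/2^n(\tfrac t2))$ in positive cohomological degree, and in degree $s\le 0$ they are built from Tate cohomology groups $H^{s-1}_T(I_{v_k};(\calK^{/2^n}_{U_\et})_\eta)$ over the real places. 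Since $\calK_{U_\et}$ has stalk $KU_2^\w$ with vanishing Tate construction, these extra low-degree contributions are of bounded size (indeed the mod-$2^n$ homotopy of $KU^{tC_2}$ vanishes, or is small and $2$-periodic), so in every fixed total degree $E_2^{s,t}$ is finite and vanishes outside a finite range of $s$. This gives strong convergence once collapse is known (the conditional convergence was already noted, $\holim\widehat{\calX}^\bullet\simeq\ast$, and a conditionally convergent spectral sequence with only finitely many nonzero terms in each total degree converges strongly, cf.\ \cite[2.10]{MR1803955}).

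For the collapse at $E_4$: the key point is that $\bH^s(\overline{Y}_\et;\widehat{j_Y}j_!\bZ/2^n(t))$ vanishes for $s\ge 4$ — this is exactly the statement that the completed \'etale site $\overline{Y}_\et$ has cohomological dimension $\le 3$ for $2$-torsion sheaves, which by the discussion following the recollement (\ref{zinkrecol}) is equivalent to Tate's \autoref{tate}. Indeed, from the short exact sequence $0\to j_{Y!}M\to \widehat M\to i_\ast\bigoplus(M_\eta)_{I_{v_k}}\to 0$ and the long exact sequence in hypercohomology, $\bH^s(\overline{Y}_\et;\widehat{j_Y}j_!\bZ/2^n(t))$ is sandwiched between $H^s(\overline{Y}_\et;j_{Y!}j_!\bZ/2^n(t))=H^s(Y_\et;j_!\bZ/2^n(t))$ and the Tate cohomology of the real points, both of which vanish for $s\ge 3$ by \autoref{tate} and the periodicity of Tate cohomology of $C_2$ combined with $KU^{tC_2}\simeq\ast$; a short degree check pins down the vanishing range at $s\ge 4$ (with a possible contribution at $s=3$). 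Given this, the only possibly nonzero differentials out of the region $0\le s\le 3$ are $d_2\colon E_r^{s,t}\to E_r^{s+2,t+2\cdot?}$ and $d_3$, and for degree reasons on the $\bZ/2^n(\tfrac t2)$-indexing — exactly as in the odd-primary collapse at $E_2$, shifted by the real-place phenomena which push collapse to $E_4$ — all $d_r$ with $r\ge 4$ land in a zero group, so the spectral sequence degenerates at $E_4$. The main obstacle is the bookkeeping in this last step: one must check that the extra classes in cohomological degrees $\le 0$ coming from the Tate cohomology of the real embeddings do not support or receive differentials $d_r$ for $r\ge 4$; this follows by comparison with the already-collapsing spectral sequence for $\calK^{/2^n}_{U_\et}(U_\et)$ via the restriction map $j_Y^\ast$ and the recollement triangle relating $\widehat{j_Y}j_!\calK^{/2^n}_{U_\et}$, $j_{Y\ast}\calK^{/2^n}_{U_\et}$ on $U_\et$, and the real-point fibers, so that the differential structure is forced by that of (\ref{ssK}) and the bounded contribution of the Tate summands.
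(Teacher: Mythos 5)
Your proposal contains two genuine gaps, and the second is the heart of the matter.

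First, the claim that the towers $\widehat{\calX}^\bullet$ and $\calX^\bullet$ are levelwise $2$-equivalent, so that the two spectral sequences agree from some page on, is false. The equivalence $\widehat{j_Y}j_!\calK_{U_\et}\simeq j_{Y\ast}j_!\calK_{U_\et}$ rests on the vanishing of the Tate construction $KU^{tC_2}$, which holds for the full $K(1)$-local K-theory sheaf but \emph{not} for its Whitehead truncations $W^t\calK^{/2^n}_{U_\et}$: the stalks of those are truncations of $KU_2^{\w}$, whose $C_2$-Tate constructions are certainly nonzero. Consequently $\widehat{\calX}^t$ and $\calX^t$ are genuinely different spectra, and the $E_2$-pages differ — one is $H^s(Y_\et;j_!\bZ/2^n(\tfrac t2))$ (concentrated in $0\le s\le 3$ by Tate's theorem), the other has a nontrivial tail of group-homology terms $\bigoplus_k H_{-s}(I_{v_k};\bZ/2^n(\tfrac t2))$ for all $s<0$. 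You cannot import collapse from the first to the second by a tower comparison.

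Second, the degree argument for collapse does not work. Once you know $E_2^{s,t}=0$ for $s\ge 4$, the spectral sequence is a shifted half-plane spectral sequence extending to $s\to-\infty$, not a first-quadrant one. A differential $d_r\colon E_r^{s,t}\to E_r^{s+r,t+r-1}$ starting from arbitrarily negative $s$ can still land in the populated strip $0\le s+r\le 3$, and there are also differentials entirely within the negative half-plane. So it is \emph{not} true that all $d_r$ for $r\ge 4$ land in zero groups; the negative half-plane requires its own argument. Your final sentence gestures at this but proposes a comparison along $j_Y^\ast$ with the spectral sequence over $U_\et$, which controls only the nonnegative columns. The actual device the paper uses is the restriction $i_Y^\ast$ to the real points: this map of spectral sequences is an isomorphism on $E_2$ for $s<0$, and the target is the homotopy orbit spectral sequence for $KU^{/2^n}_{hC_2}\simeq KO^{/2^n}$, which is known to collapse at $E_4$ with finitely many nonzero terms in each total degree. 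That comparison is what kills the would-be higher differentials supported on the negative columns and what delivers strong convergence; it is the missing ingredient in your argument.
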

\begin{proof}
Note that the hypercohomology group $\bH^s (\overline{Y}_\et;\widehat{j_Y} j_! \bZ/2^n(\tfrac{t}{2}))$ vanishes for $s \ge 4$ as the completed \'etale site $\overline{Y}_\et$ has cohomological dimension 3 for constructible sheaves; see \cite[2.6]{zink}.
This makes the spectral sequence (\ref{ss1}) into shifted half plane spectral sequence and so it suffices to show the left half plane ($s<0$) collapses at the $E_4$-page with finitely many non-zero terms in each total degree.
The left half plane consists of
\[ E^{s,t}_2 = \bigoplus_{k=1}^{r_1} H^{s-1}_T(I_{v_k}; \bZ/2^n(\tfrac{t}{2})) = \bigoplus_{k=1}^{r_1}  H_{-s}(I_{v_k}; \bZ/2^n(\tfrac{t}{2}))\]
where the latter term is group homology which are essentially governed by the real embeddings.
The site morphism $i_Y\colon Y_\infty \to \overline{Y}_\et$ induces a map on hypercohomology spectra: 
 \[\bH \left( \overline{Y}_\et;\overline{j}_! \widehat{j_U}\calK^{/2^n}_{U_\et} \right) \to \bH \left(Y_\infty;(i_Y)^\ast \overline{j}_! \widehat{j_U}\calK^{/2^n}_{U_\et} \right) \simeq \prod_{k=1}^{r_1} \bH \left( \spec \bR;(i_{v_k})^\ast \overline{j}_! \widehat{j_U}\calK^{/2^n}_{U_\et} \right)\]
 where each product factor in the latter can be identified with the real periodic K-theory spectrum mod $2^n$:
 \[(i_{v_k})^\ast \overline{j}_! \widehat{j_U}\calK^{/2^n}_{U_\et} \simeq (j_\infty)_! (i_{v_k})^\ast \widehat{j_U}\calK^{/2^n}_{U_\et} \simeq (KU^{/2^n})_{hC_2} \simeq KO^{/2^n}.\]
The induced map of the descent spectral sequence on the $E_2$-page gives:
\[\bH^s \left(\overline{Y}_\et;\widehat{j_Y} j_! \bZ/2^n(\tfrac{t}{2}) \right) \to \bigoplus_{k=1}^{r_1} \bH^s \left( \bR; (i_{v_k})^\ast \widehat{j_Y} j_! \bZ/2^n(\tfrac{t}{2}) \right)\]
which is an isomorphism for $s<0$.
The latter spectral sequence is the homotopy orbit spectral sequence associated with $KU_{hC_2}$ under the complex conjugate action. 
This collapses after $E_4$-page with finitely many non-zero terms in each total degree.
Since this forces the desired spectral sequence (\ref{ss1}) to converges strongly as well, this finishes the proof.
\end{proof}

The pairing property of the Whitehead tower functor $W^\bullet$ will then induce a pairing of spectral sequences (see \cite[\S4]{MR4121155}) and we claim this is the desired pairing.
\begin{theorem}\label{thmss}
The descent spectral sequences
\begin{align*}
& E_2^{s,t}(U_\et;\calK^{/2^n}_{U_\et} ) = H^s(U_\et; {\bZ/2^n(\tfrac{t}{2})}) \Rightarrow \pi_{t-s} \calK^{/2^n}_{U_\et} (U_\et) \\
&E_2^{s,t}\left(\overline{Y}_\et; \widehat{j_Y}j_!\calK^{/2^n}_{U_\et} \right) = \bH^s(\overline{Y}_\et; \widehat{j_Y}j_! \bZ/2^n(\tfrac{t}{2})) \Rightarrow \pi_{t-s} \widehat{j_Y}j_!\calK^{/2^n}_{U_\et} (\overline{Y}_\et) \\
& E_2^{s,t}\left(\overline{Y}_\et; j_{Y \ast} j_!\calK^{/2^n}_{U_\et} \right) = H^s(Y_\et;  j_! \bZ/2^n(\tfrac{t}{2})) \Rightarrow \pi_{t-s} j_{Y \ast} j_!\calK^{/2^n}_{U_\et} (\overline{Y}_\et)
\end{align*}
 admit a pairing of spectral sequences
 \[ E_r^{s,t}(U;\calK^{/2^n}_{U_\et} ) \ox E_r^{s',t'}\left(Y_\et; \widehat{j_Y}j_!\calK^{/2^n}_{U_\et}\right)  \to E_r^{s+s',t+t'}\left(Y_\et; j_{Y \ast} j_!\calK^{/2^n}_{U_\et} \right).\]
This pairing converges to the following pairing
\[\pi_{t-s} {\calK^{/2^n}_{U_\et}} (U_\et)  \ox \pi_{t'-s'} \widehat{j_Y}j_!\calK^{/2^n}_{U_\et} (Y_\et) \to \pi_{t+t'-(s+s')} j_{Y \ast} j_!\calK^{/2^n}_{U_\et}  (Y_\et) \]
induced from the weak equivalences
\[{\calK^{/2^n}_{U_\et}} (U_\et)  \simeq \calK^{/2^n}(\calO_F[\tfrac{1}{2}]) \quad \text{and}\quad \widehat{j_Y}j_!\calK^{/2^n}_{U_\et}  (\overline{Y}_\et) \simeq j_{Y \ast} j_!\calK^{/2^n}_{U_\et}  (\overline{Y}_\et) \simeq \Fib(\ka)^{/2^n}\]
with the pairing
\[ \calK^{/2^n}(\calO_F[\tfrac{1}{2}])\w \Fib(\ka)^{/2^n}\ \to  \Fib(\ka)^{/2^n}.\]
\end{theorem}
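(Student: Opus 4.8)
The plan is to run the construction of \cite[\S4]{MR4121155} over the completed \'etale site. All three spectral sequences in the statement arise by applying the Whitehead tower $W^\bullet=\ta^{\bullet\ge}$ to the single hypersheaf $\calK^{/2^n}_{U_\et}$ and then pushing forward, along $j_!$, $\widehat{j_Y}j_!$, or $j_{Y\ast}j_!$ respectively, before taking global sections. So the steps I would carry out are: (i) produce a pairing of the corresponding filtered objects; (ii) identify the resulting pairing of $E_2$-pages with cup products of (hyper)cohomology classes; (iii) use the already-established $E_4$-collapse and strong convergence of all three spectral sequences to conclude that the pairing of spectral sequences strongly converges to the induced pairing of abutments; and (iv) identify that pairing of abutments, via \autoref{fibashypshf}, (\ref{eq1}), and (\ref{pairing}), with the module pairing $\calK^{/2^n}(\calO_F[\tfrac{1}{2}])\w\Fib(\ka)^{/2^n}\to\Fib(\ka)^{/2^n}$.

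For (i), the formal input is that $W^\bullet$ is lax symmetric monoidal for the connective $t$-structure on hypersheaves of spectra: the smash of a $t$-connective and a $t'$-connective object is $(t+t')$-connective, so the multiplication $W^t\calE\w W^{t'}\calF\to\calE\w\calF$ factors naturally through a map $W^t\calE\w W^{t'}\calF\to W^{t+t'}(\calE\w\calF)$ compatible with the tower structure maps. Since $\calK_{U_\et}$ is a hypersheaf of $\ef$-ring spectra and $\calK^{/2^n}_{U_\et}$ is a $\calK_{U_\et}$-module, this makes $W^\bullet\calK_{U_\et}$ a filtered ring (over its $0$-th term) acting on the filtered module $W^\bullet\calK^{/2^n}_{U_\et}$. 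By \autoref{note} and the projection formula, the pushforwards $\widehat{j_Y}j_!\calK^{/2^n}_{U_\et}\simeq\overline{j}_!\widehat{j_U}\calK^{/2^n}_{U_\et}$ and $j_{Y\ast}j_!\calK^{/2^n}_{U_\et}\simeq\overline{j}_!j_{U\ast}\calK^{/2^n}_{U_\et}$ are modules over the hypersheaf of $\ef$-rings $j_{Y\ast}j_\ast\calK_{U_\et}=\overline{j}_\ast j_{U\ast}\calK_{U_\et}$, whose global sections are $L_{K(1)}K(\calO_F[\tfrac{1}{2}])$, and the canonical comparison $\widehat{j_Y}j_!(-)\to j_{Y\ast}j_!(-)$ (the fiber inclusion of the modified hypersheaf into the pushforward) is a map of such modules. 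As the recollement functors $j_!$, $j_\ast$, $i_\ast$, $\overline{j}_!$, $\overline{j}_\ast$ are exact and, by the derived \autoref{thelemma}, commute with the modification up to natural equivalence, applying them level-wise to the pairing of towers and then taking global sections (through the lax monoidal assembly map $\Gamma(-)\w\Gamma(-)\to\Gamma(-\w-)$) yields the asserted pairing of filtered spectra, hence the pairing $E_r(U;\calK^{/2^n}_{U_\et})\ox E_r(\overline{Y}_\et;\widehat{j_Y}j_!\calK^{/2^n}_{U_\et})\to E_r(\overline{Y}_\et;j_{Y\ast}j_!\calK^{/2^n}_{U_\et})$.

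For (ii), on associated graded the $t$-th layer of $W^\bullet\calK^{/2^n}_{U_\et}$ is the Eilenberg--Mac Lane hypersheaf $\SI^t\mathrm{H}\underline{\pi_t}$ with $\underline{\pi_t}=\bZ/2^n(\tfrac{t}{2})$, and the pairing of layers is $\SI^t\mathrm{H}\underline{\pi_t}\w\SI^{t'}\mathrm{H}\underline{\pi_{t'}}\to\SI^{t+t'}\mathrm{H}(\underline{\pi_t}\ox\underline{\pi_{t'}})\to\SI^{t+t'}\mathrm{H}\underline{\pi_{t+t'}}$ induced by the multiplication $\bZ/2^n(a)\ox\bZ/2^n(b)\to\bZ/2^n(a+b)$; taking (hyper)cohomology over $U_\et$ and $\overline{Y}_\et$ and pushing forward turns this into the cup product $H^s(U_\et;\bZ/2^n(a))\ox\bH^{s'}(\overline{Y}_\et;\widehat{j_Y}j_!\bZ/2^n(b))\to H^{s+s'}(Y_\et;j_!\bZ/2^n(a+b))$, whose compatibility with the connecting maps of (\ref{recolses}) and with the modification is exactly the module structure underlying the Artin--Verdier--Zink pairing (\ref{avzduality}). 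Step (iii) is then immediate since all three spectral sequences collapse at $E_4$ with finitely many nonzero terms in each total degree, and step (iv) is a matter of tracing through \autoref{fibashypshf}, the $2$-equivalence (\ref{eq1}) identifying the two $\Fib(\ka)^{/2^n}$-models, and (\ref{pairing}), to see that the pairing of abutments is the $j_{Y\ast}j_\ast\calK_{U_\et}$-module pairing on global sections.

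I expect the main obstacle to be the behaviour of the pairing in \emph{negative} filtration. Although $\widehat{j_Y}j_!\calK^{/2^n}_{U_\et}\simeq j_{Y\ast}j_!\calK^{/2^n}_{U_\et}$ as hypersheaves because $(KU^\w_2)^{tC_2}\simeq\ast$, this equivalence fails level-wise along the Whitehead tower, since the Tate spectra $(\mathrm{H}\bZ/2^n(j))^{tC_2}$ of the Eilenberg--Mac Lane layers are nonzero in every degree --- this is precisely why the spectral sequence of the $\widehat{j_Y}j_!$-model carries a nontrivial left half-plane of group(co)homology terms $\bigoplus_k H_{-s}(I_{v_k};\bZ/2^n(\tfrac{t}{2}))$ while that of the $j_{Y\ast}j_!$-model does not. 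Hence the pairing genuinely has to be built at the level of the towers, and one must check that the module structure on the modified hypersheaf supplied by \autoref{note} --- which at a real point $v_k$ is the $(KU^\w_2)^{tC_2}$-module structure on the Tate construction --- is compatible with the tower filtration, so that the pairing is defined on these negative-filtration summands as well. A secondary technical point, special to $p=2$, is that $\bS/2^n$ (in particular $\bS/2$) is not a ring spectrum, so the mod-$2^n$ pairings must be extracted from the module structures over the integral $\ef$-ring hypersheaf $\calK_{U_\et}$ rather than from any multiplication on the mod-$2^n$ hypersheaves themselves; this is exactly what the hypersheaf-level construction via (\ref{pairing}) accomplishes, and it is why one works with $\calK_{U_\et}$, not $\calK^{/2^n}_{U_\et}$, as the ambient ring object throughout.
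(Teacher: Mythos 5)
Your proposal follows essentially the same route as the paper: apply the Whitehead tower $W^\bullet$, use (\ref{pairing}) together with the $j_{Y\ast}j_\ast\calK$-module structure from \autoref{note}, and route the pairing through the natural transformation $\widehat{j_Y}\to Rj_{Y\ast}$ so that it lands in the $j_{Y\ast}j_!\calK^{/2^n}_{U_\et}$ model. Two small divergences are worth flagging. First, your step (ii), the identification of the $E_2$-pairing with cup/Yoneda products, is in the paper the content of the \emph{next} statement (\autoref{yonedapairing}) and is not needed to establish this theorem, which asserts only the existence and convergence of the pairing. Second, your ``main obstacle'' observation is accurate --- the level-wise Tate constructions on the Eilenberg--Mac~Lane layers do not vanish, so $\widehat{j_Y}j_!W^{t'}\calK^{/2^n}_{U_\et}$ and $j_{Y\ast}j_!W^{t'}\calK^{/2^n}_{U_\et}$ differ --- but this does not create a compatibility problem: the pairing first factors through the natural map $\widehat{j_Y}j_!W^{t'}\calK^{/2^n}_{U_\et}\to j_{Y\ast}j_!W^{t'}\calK^{/2^n}_{U_\et}$, and the target spectral sequence has no negative filtration, so the negative-filtration terms automatically pair to zero. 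Finally, your ``secondary technical point'' slightly misreads the statement: both tensor factors of the $E_2$-pairing carry $\bZ/2^n$-coefficients (one sees $H^s(U_\et;\bZ/2^n(\tfrac{t}{2}))$, not integral cohomology), so the theorem does require a map $M_{\bZ/2^n}\w M_{\bZ/2^n}\to M_{\bZ/2^n}$; the paper supplies this via \autoref{moore} by restricting to $n\ge 3$ rather than by taking the integral $\calK_{U_\et}$ as the ambient ring. Your alternative would give a pairing of a different shape, which could also be made to work in the main theorem, but it is not what this statement asserts.
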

\begin{proof}
Given a pairing (\ref{pairing}), the pairing property of the Whitehead tower functor $W^\bullet$ induces a pairing 
\begin{align*}
W^t \calK^{/2^n}_{U_\et}& (U_\et) \w \widehat{\calX}^{t'}(\overline{Y}_\et) = j_{Y \ast} j_\ast W^t \calK^{/2^n}_{U_\et} (\overline{Y}_\et) \w \widehat{j_Y} j_! W^{t'} \calK^{/2^n}_{U_\et} (\overline{Y}_\et)  \\
& \to  j_{Y \ast} j_\ast W^t \calK^{/2^n}_{U_\et} (\overline{Y}_\et) \w j_{Y \ast} j_! W^{t'} \calK^{/2^n}_{U_\et} (\overline{Y}_\et)  \\
&\to j_{Y \ast} j_! W^{t+t'}(\calK^{/2^n}_{U_\et} (\overline{Y}_\et) \w \calK^{/2^n}_{U_\et} (\overline{Y}_\et)) \\
& \to  j_{Y \ast} j_! W^{t+t'} \calK^{/2^n}_{U_\et} (\overline{Y}_\et)  = \calX^{t+t'}(\overline{Y}_\et),
\end{align*}
and this induces a pairing of spectral sequences.
A note after \autoref{fibashypshf} then identifies this with our model (\ref{pairing}).
\end{proof}

Next, we show that the pairing on the $E_2$-page coincides with the Yoneda pairing used in (\ref{identification}).
\begin{theorem}\label{yonedapairing}
Under the canonical isomorphisms
\begin{align*}
H^{\ast}(U_\et; \bZ/2^n(\tfrac{t}{2}))&\cong \Ext^{\ast}_U(\bZ/2^n(t'),\bZ/2^n(\tfrac{t}{2}+\tfrac{t'}{2})) \\
&\cong   \Ext^{\ast}_Y(j_! \bZ/2^n(t'), j_! \bZ/2^n(\tfrac{t}{2}+\tfrac{t'}{2})) \\ 
&\cong   \Ext^{\ast}_Y( j_Y^\ast \widehat{j_Y} j_! \bZ/2^n(t'), j_! \bZ/2^n(\tfrac{t}{2}+\tfrac{t'}{2}))\\
&\cong   \Ext^{\ast}_{\overline{Y}}(  \widehat{j_Y} j_! \bZ/2^n(t'),Rj_{Y \ast} j_! \bZ/2^n(\tfrac{t}{2}+\tfrac{t'}{2})),
\end{align*}
the multiplication on the $E_2$-term in \autoref{thmss} coincides with the Yoneda pairing
\[\begin{split}
\Ext^{\ast}_{\overline{Y}}\left( \widehat{j_Y} j_! \bZ/2^n(\tfrac{t'}{2}),Rj_{Y \ast} j_! \bZ/2^n(\tfrac{t}{2}+\tfrac{t'}{2}) \right) \ox \bH^\ast(\overline{Y}_\et; \widehat{j_Y}j_! \bZ/2^n(\tfrac{t'}{2})) \qquad \\
 \to  \bH^\ast(\overline{Y}_\et; Rj_{Y \ast} j_! \bZ/2^n(\tfrac{t'}{2})) \cong  H^\ast(\overline{Y}_\et; j_! \bZ/2^n(\tfrac{t'}{2})) .
\end{split}\]
\end{theorem}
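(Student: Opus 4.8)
The plan is to follow the template of \cite[\S4]{MR4121155}, with the modified functor $\widehat{j_Y}\,j_!$ playing the role that $j_!$ plays at odd primes, and with the derived form of \autoref{thelemma} used to commute the modification past the recollement functors of (\ref{diagram}). Concretely, I would first recognize the $E_2$-pairing of \autoref{thmss} as a cup product on hypercohomology of homotopy sheaves, and then verify that the chain of isomorphisms (\ref{identification}) transports this cup product to the Yoneda pairing in the statement.

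For the first step, recall that all three spectral sequences of \autoref{thmss} come from Whitehead towers whose associated graded pieces are Eilenberg--MacLane hypersheaves. On $U_\et$ the cofiber $C^t\calK^{/2^n}_{U_\et}$ is the $t$-fold shift of the Eilenberg--MacLane hypersheaf on the homotopy sheaf $\underline{\bZ/2^n(\tfrac{t}{2})}$, zero for odd $t$. Applying the exact functor $j_!$ and then $\widehat{j_Y}$ --- using $\widehat{j_Y}\,j_!\simeq\overline{j}_!\,\widehat{j_U}$ from the derived form of \autoref{thelemma}, so that the modification is performed only on the real fibre, where it is governed by Tate cohomology as in \autoref{derivedmodifiedsheaf} --- produces towers whose layers compute, on global sections, the $E_2$-terms $H^s(Y_\et;j_!\bZ/2^n(\tfrac{t}{2}))$ and $\bH^s(\overline{Y}_\et;\widehat{j_Y}\,j_!\bZ/2^n(\tfrac{t}{2}))$. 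The pairing of towers constructed in the proof of \autoref{thmss} is, on associated graded pieces, induced by the multiplication on the homotopy sheaves of the $K(1)$-local $K$-theory hypersheaf; on $\pi_{2i}$ this is the standard multiplication $\bZ/2^n(i)\ox\bZ/2^n(i')\to\bZ/2^n(i+i')$ of Tate twists, precisely the input used at odd primes. By the multiplicativity of the Thomason--Jardine descent spectral sequence (\cite[\S3]{MR1803955}, \cite[\S4]{MR4121155}), the $E_2$-pairing is therefore the cup product, with the first factor acting through the $j_{Y\ast}j_\ast\calK^{/2^n}_{U_\et}$-module structure on $\widehat{j_Y}\,j_!\calK^{/2^n}_{U_\et}$ of \autoref{note} (so that $\bH^s(\overline{Y}_\et;j_{Y\ast}j_\ast\bZ/2^n(i))\cong H^s(U_\et;\bZ/2^n(i))$ supplies the first tensor slot).

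It then remains to chase this cup product through (\ref{identification}), using that every isomorphism there respects composition of extensions. The identification $H^\ast(U_\et;\bZ/2^n(i))\cong\Ext^\ast_U(\bZ/2^n(t'),\bZ/2^n(i+t'))$ records that $\bZ/2^n(t')$ is an invertible object of the derived category of $\bZ/2^n$-sheaves on $U_\et$ (étale-locally $\bZ/2^n(1)\cong\bZ/2^n$, as $2$ is invertible on $U$), so that $R\Hom_U(\bZ/2^n(t'),-)\simeq(-)\ox\bZ/2^n(-t')$; under this equivalence cup product becomes Yoneda composition. The isomorphism $\Ext^\ast_U(A,B)\cong\Ext^\ast_Y(j_!A,j_!B)$ holds because $j_!$ is exact and fully faithful, and it respects Yoneda products since $j_!$ is a functor; the replacement of $j_!A$ by $j_Y^\ast\widehat{j_Y}\,j_!A$ is the natural isomorphism $j_Y^\ast\widehat{M}\cong M$ of (\ref{modifiedsheaf}); and the last step is the derived adjunction $\Ext^\ast_Y(j_Y^\ast\calG,\calH)\cong\Ext^\ast_{\overline{Y}}(\calG,Rj_{Y\ast}\calH)$, which respects Yoneda products as $Rj_{Y\ast}$ is lax monoidal. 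Composing these, the cup-product pairing of the previous step is carried to the Yoneda pairing asserted in the statement, the concluding identification $\bH^\ast(\overline{Y}_\et;Rj_{Y\ast}j_!\bZ/2^n(i))\cong H^\ast(Y_\et;j_!\bZ/2^n(i))$ being the one already used in (\ref{seqofiso}).

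The genuinely delicate point is the compatibility of the modification functor $\widehat{(-)}$ with products: it is neither a left nor a right adjoint and not a priori monoidal, so the naturality used above cannot be extracted from formal adjunction arguments alone. Following \autoref{note} and the proof of \autoref{fibashypshf}, I would resolve this by phrasing the whole pairing in terms of the $j_{Y\ast}j_\ast\calK^{/2^n}_{U_\et}$-module structure on the two equivalent models $\widehat{j_Y}\,j_!\calK^{/2^n}_{U_\et}\simeq j_{Y\ast}j_!\calK^{/2^n}_{U_\et}$ of $\Fib(\ka)^{/2^n}$: the pairing in \autoref{thmss} is literally this module action followed by global sections, and module structures are transported by the recollement functors of (\ref{diagram}) via \autoref{thelemma}. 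With this reformulation in place, the agreement with the Yoneda pairing reduces to the single formal fact that the evaluation $R\Hom_U(\bZ/2^n(t'),-)\ox\bZ/2^n(t')\to\id$ underlies both the cup product on the descent $E_2$-page and the Yoneda pairing of (\ref{identification}).
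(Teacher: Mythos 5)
Your proposal follows essentially the same two-stage outline as the paper: first identify the $E_2$-pairing of \autoref{thmss} with a cup product coming from multiplication on the homotopy sheaves (Tate twists), then transport this through the chain of isomorphisms (\ref{identification}) to the Yoneda pairing, and you correctly single out the monoidal (in)compatibility of the modification functor $\widehat{(-)}$ as the one genuinely delicate point. Where the paper is more careful than your proposal is precisely in settling that point: rather than arguing from a general principle about transport of module structures, the paper verifies explicitly that the spectrum-level pairing
\[
j_{Y\ast}j_\ast H\bZ/2^n(\tfrac{t}{2}) \w \widehat{j_Y}j_!H\bZ/2^n(\tfrac{t'}{2}) \to j_{Y\ast}j_!H\bZ/2^n(\tfrac{t+t'}{2})
\]
descends, under Dold--Kan, to the derived tensor product of complexes of abelian sheaves, by checking the three pieces of the recollement separately — over ordinary points this is the computation already done in \cite[\S4]{MR4121155}, over each real point $v_k$ it is a composite of the hypernorm $(\ )_{hC_2}\to(\ )^{hC_2}$ with the $\ef$-multiplication on $(H\bZ/2^n)^{hC_2}$, and compatibility along the gluing morphisms comes down to the explicit commutativity of the square involving $\mathrm{Nm}$, which holds because the hypernorm is a map of $(H\bZ/2^n(\tfrac{t}{2}))^{hC_2}$-modules. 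Your appeal to \autoref{thelemma} for ``transport of module structures by recollement functors'' is not by itself enough, because \autoref{thelemma} only records compatibility of $\widehat{(-)}$ with $j_\ast$, $j^\ast$, $j_!$, not with smash products or module actions; the equivariance of the hypernorm is the extra nontrivial input that actually closes the gap you flagged. So: same route, but the paper supplies the local verification you deferred, and that verification is where the real content lies.
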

\begin{proof}
The multiplication on the $E_2$-term is induced by the map of pairs
\[j_{Y \ast} j_\ast C^t \calK^{/2^n}_{U_\et}(\overline{Y}_\et) \w C^{t'} \widehat{\calX}(\overline{Y}_\et) \to C^{t+t'} \calX(\overline{Y}_\et). \]
The homotopy cofibers $j_{Y \ast} j_\ast C^\bullet \calK^{/2^n}_{U_\et}$, $j_{Y \ast} C^\bullet \calX$ and $C^\bullet \widehat{\calX}$ are models for the \'etale hypersheaf $\SI^\bullet j_{Y \ast} j_\ast H\bZ/2^n(\tfrac{t}{2})$, $\SI^\bullet j_{Y \ast} j_! H\bZ/2^n(\tfrac{t'}{2})$ and $\SI^\bullet \widehat{j_Y} j_! H\bZ/2^n(\tfrac{t'}{2})$, respectively.
The induced map on homotopy groups of global sections is then the composite of the cup product
\begin{align*}
 H^\ast(U_\et;& \bZ/2^n(\tfrac{t}{2}) ) \ox \bH^\ast(\overline{Y}_\et; \widehat{j_Y} j_! \bZ/2^n (\tfrac{t'}{2}) ) \\
& \cong  \bH^\ast(\overline{Y}_\et;  j_{Y \ast} j_\ast H\bZ/2^n(\tfrac{t}{2}))  \ox \bH^\ast(\overline{Y}_\et; \widehat{j_Y} j_! H\bZ/2^n (\tfrac{t'}{2})) \\
& \to  \pi_{-\ast} \left( \bH(\overline{Y}_\et;  j_{Y \ast} j_\ast H\bZ/2^n(\tfrac{t}{2}))  \w \bH(\overline{Y}_\et; \widehat{j_Y} j_! H\bZ/2^n (\tfrac{t'}{2})) \right) \\
& \to  \pi_{-\ast}  \bH(\overline{Y}_\et; j_{Y \ast} j_\ast H\bZ/2^n(\tfrac{t}{2}) \w \widehat{j_Y} j_! H\bZ/2^n (\tfrac{t'}{2}) ),
\end{align*}
and the map of \'etale hypersheaves of spectra on $\overline{Y}_\et$
\begin{align}\label{hypshpair}
\begin{split}
 j_{Y \ast} & j_\ast H\bZ/2^n(\tfrac{t}{2}) \w \widehat{j_Y} j_! H\bZ/2^n (\tfrac{t'}{2}) \\
& \to j_{Y \ast} j_\ast H\bZ/2^n(\tfrac{t}{2}) \w j_{Y \ast} j_! H\bZ/2^n (\tfrac{t'}{2}) \\
&\qquad \to j_{Y \ast} j_! H\bZ/2^n (\tfrac{t+t'}{2}),
\end{split}
\end{align}
where the former map is induced by the natural transformation $\widehat{j_Y} \to Rj_{Y \ast}$ and the latter map is induced by the pairing.
Using the stable Dold-Kan correspondence, the map (\ref{hypshpair}) can be written as follows
\begin{equation}\label{eq4}
 H (R j_{Y \ast} j_\ast \bZ/2^n(\tfrac{t}{2})) \w H \widehat{j_Y} j_! \bZ/2^n (\tfrac{t'}{2}) \to H (Rj_{Y \ast} j_! \bZ/2^n (\tfrac{t+t'}{2})).
\end{equation}
We claim that this pairing factors through the derived tensor product on the derived category of sheaves of abelian groups on $\overline{Y}_\et$.
Then the rest of the proof follows from the relationship between the cup product and the Yoneda product in the derived category of sheaves of abelian groups on $Y_\et$, cf. \cite[\S5.1]{MR0559531}

Over each real point $v_k$, the map (\ref{eq4}) reduces to the composite
\begin{equation}\label{eq5}
 \begin{split}
 (H\bZ/2^n(\tfrac{t}{2}))^{h C_2} \w (H\bZ/2^n(\tfrac{t'}{2}))_{h C_2} \qquad \qquad \\
 \to  (H\bZ/2^n(\tfrac{t}{2}))^{h C_2} \w (H\bZ/2^n(\tfrac{t'}{2}))^{h C_2}  \\
 \to (H\bZ/2^n(\tfrac{t+t'}{2}))^{h C_2} , 
\end{split}
\end{equation}
where the former map is induced by the hypernorm on $H\bZ/2^n(\tfrac{t'}{2})$ and the latter map is an $\ef$-multiplication on $(H\bZ/2^n(\tfrac{t+t'}{2}))^{h C_2}$ and these are induced by the pairing of discrete rings.
Over the site $Y_\et$, the map (\ref{eq4}) reduces to the pairing
\begin{equation}\label{eq6}
 H (Rj_\ast \bZ/2^n(\tfrac{t}{2})) \w H j_! \bZ/2^n (\tfrac{t'}{2}) \to H j_! \bZ/2^n (\tfrac{t+t'}{2}),
\end{equation}
and this factors through the derived tensor product as in \cite[\S4]{MR4121155}.
Now we need to show two pairing maps (\ref{eq5}) and (\ref{eq6}) are compatible along the gluing morphisms.
Over an each real point $v_k$, this boils down to show the following diagram to commutes:
\[\xymatrix{
 (H\bZ/2^n(\tfrac{t}{2}))^{h C_2} \w (H\bZ/2^n(\tfrac{t'}{2}))_{h C_2}  \ar[r] \ar[d]^-{\id \w Nm} &  (H\bZ/2^n(\tfrac{t+t'}{2}))_{h C_2} \ar[d]^-{Nm} \\
( H \bZ/2^n(\tfrac{t}{2}) )^{hC_2} \w ( H \bZ/2^n (\tfrac{t'}{2}))^{hC_2} \ar[r] & (H \bZ/2^n (\tfrac{t+t'}{2}))^{hC_2}.
}\]
This diagram commute since the hypernorm map is $(H \bZ/2^n(\tfrac{t}{2}) )^{hC_2}$ equivariant.

Hence the multiplication on the $E_2$-term can be written as follows:
\begin{align*}
 H^\ast(U_\et;& \bZ/2^n(\tfrac{t}{2}) ) \ox \bH^\ast(\overline{Y}_\et; \widehat{j_Y} j_! \bZ/2^n (\tfrac{t'}{2}) ) \\
& \cong  \bH^\ast(\overline{Y}_\et;  j_{Y \ast} j_\ast H\bZ/2^n(\tfrac{t}{2}))  \ox \bH^\ast(\overline{Y}_\et; \widehat{j_Y} j_! H\bZ/2^n (\tfrac{t'}{2})) \\
& \to  H^{\ast} \left( \bH_{\Ab}(\overline{Y}_\et;  R j_{Y \ast} j_\ast \bZ/2^n(\tfrac{t}{2}))  \ox^\bL \bH_{\Ab}(\overline{Y}_\et; \widehat{j_Y} j_! \bZ/2^n (\tfrac{t'}{2})) \right) \\
& \to  \bH^\ast (\overline{Y}_\et; R j_{Y \ast} j_\ast \bZ/2^n(\tfrac{t}{2}) \ox^\bL \widehat{j_Y} j_! \bZ/2^n (\tfrac{t'}{2}) ) \\
& \to  \bH^\ast (\overline{Y}_\et; R j_{Y \ast} j_\ast \bZ/2^n(\tfrac{t}{2}) \ox^\bL R j_{Y \ast} j_! \bZ/2^n (\tfrac{t'}{2}) ) \\
& \to  \bH^\ast (\overline{Y}_\et; R j_{Y \ast} j_! \bZ/2^n (\tfrac{t+t'}{2}) ) = H^\ast( Y_\et;j_! \bZ/2^n (\tfrac{t+t'}{2}) ).
\end{align*}
\end{proof}

\subsection{Proof of the main theorem}
Before proving the main theorem, we analyze the multiplicative structure of the mod $2^n$ Moore spectrum $M_{\bZ/2^n}$.
Unlike odd prime, the mod 2 Moore spectrum $M_{\bZ/2}$ does not admit a unital multiplication. 
For $n \ge 3$, however, $M_{\bZ/{2^n}}$ admits an $\bE_1$-algebra structure \cite{burklund}.
Furthermore, this multiplicative structure is not unique \cite{MR760188}.

\begin{lemma}\label{moore}
For $n\ge 3$, any multiplication on the Moore spectrum $M_{\bZ/{2^n}}$ makes the following composite an equivalence 
\[M_{\bZ/{2^n}} \to F(M_{\bZ/{2^n}}, M_{\bZ/{2^n}}) \to F(M_{\bZ/{2^n}}, M_{\bZ/{2^\infty}}),\]
where the first map is adjoint to a multiplication map and the second map is induced by the canonical map $M_{\bZ/{2^n}}\to M_{\bZ/{2^\infty}}$.
Under this equivalence, the map
\[M_{\bZ/{2^n}} \w M_{\bZ/{2^n}} \simeq F(M_{\bZ/{2^n}}, M_{\bZ/2^\infty}) \w M_{\bZ/{2^n}} \to M_{\bZ/2^\infty}\]
induced by the evaluation is the same as the composite of the multiplication on $M_{\bZ/{2^n}}$
and the inclusion $M_{\bZ/{2^n}} \to M_{\bZ/2^\infty}$.
\end{lemma}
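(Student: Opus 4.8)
The plan is to reduce both statements to an elementary computation on $\pi_0$; the key point will be that the target $F(M_{\bZ/2^n},M_{\bZ/2^\infty})$ is small. Write $M=M_{\bZ/2^n}$, let $c\colon M\to M_{\bZ/2^\infty}$ be the canonical map, $\iota_n\colon\bS\to M$ the unit, and denote by $\phi\colon M\to F(M,M)\to F(M,M_{\bZ/2^\infty})$ the composite in the statement. First I would record, from the cofiber sequence $\bS\overset{2^n}{\to}\bS\to M$, the natural equivalence $F(M,Y)\htp\Fib(2^n\colon Y\to Y)$. Taking $Y=M_{\bZ/2^\infty}$ and noting that multiplication by $2^n$ is an endomorphism of the cofiber sequence $\bS\to\bS[\tfrac{1}{2}]\to M_{\bZ/2^\infty}$ which is an equivalence on the middle term, passage to vertical fibers gives a cofiber sequence $\SI^{-1}M_{\bZ/2^n}\to\ast\to\Fib(2^n\colon M_{\bZ/2^\infty}\to M_{\bZ/2^\infty})$, whence
\[F(M,M_{\bZ/2^\infty})\htp\Fib\bigl(2^n\colon M_{\bZ/2^\infty}\to M_{\bZ/2^\infty}\bigr)\htp M_{\bZ/2^n}.\]
In particular $F(M,M_{\bZ/2^\infty})$ is connective with integral homology the single group $\bZ/2^n$ in degree $0$. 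Furthermore $\pi_1 M_{\bZ/2^\infty}=0$ (from the same cofiber sequence, as $\pi_1\bS[\tfrac{1}{2}]=0$ and $\pi_0\bS\hookrightarrow\pi_0\bS[\tfrac{1}{2}]$) and $2^n$ acts surjectively on $\pi_0 M_{\bZ/2^\infty}\iso\bZ/2^\infty$, so the long exact sequence of $F(M,M_{\bZ/2^\infty})\to M_{\bZ/2^\infty}\overset{2^n}{\to}M_{\bZ/2^\infty}$ shows that restriction along $\iota_n$ identifies $\pi_0 F(M,M_{\bZ/2^\infty})=[M,M_{\bZ/2^\infty}]$ with the $2^n$-torsion subgroup $(\tfrac{1}{2^n}\bZ)/\bZ\iso\bZ/2^n$ of $\pi_0 M_{\bZ/2^\infty}$.

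Next I would pin down $\phi$ on $\pi_0$. Since $\iota_n$ generates $\pi_0 M\iso\bZ/2^n$ and any multiplication $\mu$ on $M$ is unital with unit $\iota_n$, the adjoint $\widetilde\mu\colon M\to F(M,M)$ of $\mu$ satisfies $\widetilde\mu\circ\iota_n=\id_M$ in $\pi_0 F(M,M)=[M,M]$, because under the adjunction $[\bS,F(M,M)]\iso[M,M]$ it corresponds to $\mu\circ(\iota_n\w\id_M)=\id_M$ (left unitality). Postcomposing with $c$ gives $\phi\circ\iota_n=c$ in $[M,M_{\bZ/2^\infty}]$, and under the identification above $c$ corresponds to $c\circ\iota_n\colon\bS\to M\to M_{\bZ/2^\infty}$, which on $\pi_0$ realizes $\bZ\twoheadrightarrow\bZ/2^n\hookrightarrow\bZ/2^\infty$ and so has order exactly $2^n$, i.e.\ generates the $2^n$-torsion subgroup. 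Hence $\phi_\ast\colon\pi_0 M\to\pi_0 F(M,M_{\bZ/2^\infty})$ takes a generator to a generator, and is an isomorphism.

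Part (a) then follows formally: $\phi\w H\bZ$ is a map of connective spectra with homotopy concentrated in degree $0$, where it is an isomorphism by the above and the Hurewicz isomorphism $\pi_0\iso H\bZ_0$; thus $\phi$ is an $H\bZ$-homology equivalence, $\Cof(\phi)$ is connective with vanishing integral homology, and iterated use of the Hurewicz theorem forces $\Cof(\phi)\htp\ast$, so $\phi$ is an equivalence. For the remaining assertion, write $\phi$ as $\widetilde\mu$ followed by postcomposition with $c$; naturality in $Y$ of the evaluation $\mathrm{ev}_Y\colon F(M,Y)\w M\to Y$ yields $\mathrm{ev}_{M_{\bZ/2^\infty}}\circ(\phi\w\id_M)=c\circ\mathrm{ev}_M\circ(\widetilde\mu\w\id_M)=c\circ\mu$, the last equality being precisely the adjunction defining $\widetilde\mu$.

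The real content lies in the first two paragraphs — above all the observation that $F(M_{\bZ/2^n},M_{\bZ/2^\infty})$ has integral homology in a single degree, which is what makes checking $\pi_0$ enough, together with the identification of the class of $c$; everything afterward is formal. I would also note that the multiplication enters only through its left unit, so the argument applies uniformly to all the (non-unique) $\mathbb{E}_1$-structures on $M_{\bZ/2^n}$, $n\ge3$.
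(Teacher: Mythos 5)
Your proof is correct, and at heart it is the same strategy as the paper's: both apply $F(M_{\bZ/2^n},-)$ to the map of cofiber sequences from $\bS\overset{2^n}{\to}\bS\to M_{\bZ/2^n}$ to $\bS\to\bS[\tfrac12]\to M_{\bZ/2^\infty}$, use the vanishing of $F(M_{\bZ/2^n},\bS[\tfrac12])$ to collapse the target, and read off that the mapping spectrum is abstractly another mod $2^n$ Moore spectrum. Where the paper's proof stops at a terse diagram with a "dashed splitting", you carry the argument through to completion: you compute $\pi_0$ of the target and of $\phi$ explicitly (using only the left unit, which you rightly note is the sole input from the multiplication), and then invoke Hurewicz plus the Whitehead theorem for connective spectra to upgrade the $\pi_0$ isomorphism to an equivalence; you also supply the short naturality-of-evaluation argument for the second assertion, which the paper leaves tacit. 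This makes your write-up somewhat more self-contained than the paper's, at the cost of being longer.
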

\begin{proof}
Consider the following map of cofiber sequences:
 \[\xymatrix{
\bS \ar[r]^-{2^n} \ar[d] & \bS \ar[r] \ar[d]^-{1/2^n} & M_{\bZ/{2^n}} \ar[d] \ar[r] & \bS^1 \ar[d]^-\id \\
\bS \ar[r]  & \bS[\tfrac{1}{2}] \ar[r] & M_{\bZ/{2^\infty}} \ar[r] & \bS^1.
}\]
Applying the functor $F(M_{\bZ/{2^n}},-)$, we get the following
\[\xymatrix{
F(M_{\bZ/{2^n}}, \bS) \ar[r] \ar[d]^-{1/{2^n}} & F(M_{\bZ/{2^n}}, M_{\bZ/{2^n}}) \ar[d] \ar[r] & F(M_{\bZ/{2^n}}, \bS^1) \ar[d]^-\id \ar@/^1.5pc/@{-->}[l] \\
F(M_{\bZ/{2^n}}, \bS[\tfrac{1}{2}]) \ar[r] & F(M_{\bZ/{2^n}}, M_{\bZ/{2^\infty}}) \ar[r]^-\simeq & F(M_{\bZ/{2^n}}, \bS^1)
}\]
where the upper row admits a dashed splitting.
\end{proof}

\begin{proof}[Proof of the K-theoretic Tate-Poitou duality at prime 2]
Consider a composite
 \[ \Fib(\ka) \w L_{K(1)}K(\calO_F[\tfrac{1}{2}]) \to \Fib(\ka) \to \SI^{-1}I_{\bZ_2}\bS\]
 of a $L_{K(1)}K(\calO_F[\tfrac{1}{2}])$-action and the canonical map in (\ref{dualityclass}).
\begin{equation}\label{lasteqn}
\begin{split}
  \Fib(\ka) \to \SI^{-1}I_{\bZ_2}  (L_{K(1)}K(\calO_F[\tfrac{1}{2}])) \qquad\qquad\qquad\\
  \simeq \SI^{-1}I_{\bQ/\bZ}  (L_{K(1)}K(\calO_F[\tfrac{1}{2}] \w M_{\bZ/2^\infty} )).
\end{split}
\end{equation}
Since both sides are 2-complete, it suffices to show (\ref{lasteqn}) becomes an equivalence after smashing with mod $2^n$ Moore spectrum $M_{2^n}$ for $n\ge 3$.
Using the canonical weak equivalence
\[\SI^{-1}I_{\bQ/\bZ}(L_{K(1)}K(\calO_F[\tfrac{1}{2}] \w M_{\bZ/2^\infty} )) /2^n  \simeq \SI^{-1}I_{\bQ/\bZ}(L_{K(1)}K(\calO_F[\tfrac{1}{2}]  /2^n  )),\]
the induced map 
 \[ 
  \Fib(\ka) / 2^n \to \SI^{-1}I_{\bQ/\bZ}(L_{K(1)}K(\calO_F[\tfrac{1}{2}]  /2^n  ))
\]
is adjoint to the map
\begin{equation}\label{lastmap}
\Fib(\ka) / 2^n \w L_{K(1)}K(\calO_F[\tfrac{1}{2}])  /2^n  \to \SI^{-1}I_{\bQ/\bZ} \bS.
\end{equation}
By \autoref{moore}, the map (\ref{lastmap}) is the composite of the multiplication
\[\Fib(\ka)  / 2^n  \w L_{K(1)}K(\calO_F[\tfrac{1}{2}]) / 2^n  \to \Fib(\ka) / 2^n \]
and the map
\[ \Fib(\ka)  / 2^n \to  \Fib(\ka) \w M_{\bZ/2^\infty}  \to \SI^{-1}I_{\bZ_2}\bS,\]
where the last map is (\ref{dualityclassadj}), adjoint to $u_{\calO_F}$.

Because  (\ref{avzduality}) is a perfect pairing, \autoref{thmss} and \autoref{yonedapairing} give a pairing of the spectral sequences which is perfect pairing on $E_2$-page.
This perfect pairing descends to $E_\infty$-page, and this implies that (\ref{lastmap}) induces a perfect pairing on homotopy groups
\[ \pi_q (\Fib(\ka) / 2^n ) \ox \pi_{-q-1} (L_{K(1)}K(\calO_F[\tfrac{1}{2}])  /2^n)  \to \bQ/\bZ.
\]
Thus we deduce that (\ref{lasteqn}) is a weak equivalence and this finishes the proof.
\end{proof}


\bibliography{KT_bib}
\bibliographystyle{alpha}

\end{document}